\documentclass[11pt]{amsart}
\usepackage[colorlinks=true,pagebackref,hyperindex,citecolor=green,linkcolor=red]{hyperref}
\usepackage{amsmath}
\usepackage{amsfonts}
\usepackage{amssymb}
\usepackage{color}
\usepackage[all]{xy}  
\usepackage{enumerate}
\usepackage[top=1in, bottom=1in, left=1in, right=1in]{geometry}
\usepackage{mathrsfs}

\usepackage{stmaryrd}


\theoremstyle{definition}
\newtheorem{theorem}{Theorem}[section]

\newtheorem{corollary}[theorem]{Corollary}
\newtheorem{lemma}[theorem]{Lemma}
\newtheorem{proposition}[theorem]{Proposition}
\newtheorem{claim}[theorem]{Claim}
\newtheorem{subclaim}[theorem]{Subclaim}

\newtheorem{algorithm}[theorem]{Algorithm}
\newtheorem*{claim*}{Claim}

\theoremstyle{definition}
\newtheorem{definition}[theorem]{Definition}

\newtheorem{example}[theorem]{Example}

\newtheorem{remark}[theorem]{Remark}
\numberwithin{equation}{subsection}



\newcommand{\m}{\mathfrak{m}}


\newcommand{\NN}{\mathbb{N}}
\newcommand{\ZZ}{\mathbb{Z}}

\newcommand{\FF}{\mathbb{F}}
\newcommand{\PP}{\mathbb{P}}

\newcommand{\End}{\operatorname{End}}

\newcommand{\Char}{\operatorname{char}}

\newcommand{\CC}{\mathcal{C}}
\newcommand{\supp}{\operatorname{supp}}



\renewcommand{\!}[1]{{\color{red}\text{\Huge$\star$ }#1}}

\newcommand{\ls}{\leqslant}
\newcommand{\gs}{\geqslant}
\newcommand{\ds}{\displaystyle}

\newcommand{\DD}{\mathcal{D}_R}
\def\norm#1{\lvert\lvert #1\rvert\rvert}

\usepackage{fancyhdr}
\lhead{}
\chead{}
\rhead{}
\lfoot{}
\cfoot{\thepage}
\rfoot{}
\usepackage{setspace}

\begin{document}

\author[A.\,F.\,Boix]{Alberto F.\,Boix$^{*}$}
\thanks{$^{*}$Partially supported by MTM2013-40775-P}
\address{Department of Economics and Business, Universitat Pompeu Fabra, Jaume I Building, Ramon Trias Fargas 25-27, 08005 Barcelona, Spain.}
\email{alberto.fernandezb@upf.edu}
\urladdr{http://atlas.mat.ub.edu/personals/aboix/}

\author[A.\,De Stefani]{Alessandro De Stefani$^{\dagger}$}
\thanks{$^{\dagger}$Partially supported by NSF Grant DMS-$1259142$}
\address{Department of Mathematics, University of Virginia, 141 Cabell Drive, Kerchof Hall Charlottesville, VA 22903, USA}
\email{ad9fa@viginia.edu}
\urladdr{http://people.virginia.edu/~ad9fa/}

\author[D.\,Vanzo]{Davide Vanzo}
\address{Departimento di Matematica e Informatica, Universit{\'a} di Firenze, Viale Morgagni, 67/a - 50134 Firenze, Italy}
\email{davide.vanzo@unifi.it}

\keywords{Algorithm, Differential operator, Frobenius map, Prime characteristic.}

\subjclass[2010]{Primary 13A35; Secondary 13N10, 14B05}

\title[An algorithm for differential operators in positive characteristic]{An algorithm for constructing certain differential operators in positive characteristic}

\begin{abstract} Given a non-zero polynomial $f$ in a polynomial ring $R$ with coefficients in a finite field of prime characteristic $p$, we present an algorithm to compute a differential operator $\delta$ which raises $1/f$ to its $p$th power. For some specific families of polynomials, we also study the level of such a differential operator $\delta$, i.e., the least integer $e$ such that $\delta$ is $R^{p^e}$-linear. In particular, we obtain a characterization of supersingular elliptic curves in terms of the level of the associated differential operator.
\end{abstract}
\maketitle
\section{Introduction}
Let $R=k[x_1,\ldots,x_d]$ be the polynomial ring over a field $k$, and let $\DD$ be the ring of $k$-linear differential operators on $R$. For every non-zero $f \in R$, the natural action of $\DD$ on $R$ extends uniquely to an action on $R_f$. In characteristic 0, it has been proven by Bernstein in the polynomial ring case (cf.\,\cite[Corollary 1.4]{Bernstein1972}) that $R_f$ has finite length as a $\DD$-module. The minimal $m$ such that $R_f = \DD \cdot \frac{1}{f^m}$ is related to Bernstein-Sato polynomials (cf. \cite[Theorem 23.7, Definition 23.8, and Corollary 23.9]{TwentyFourHours}), and there are examples in which $m >1$ (e.g.\,\cite[Example 23.13]{TwentyFourHours}). Remarkably, in positive characteristic, not only is $R_f$ finitely generated as a $\DD$-module \cite[Proposition 3.3]{Bog}, but it is generated by $\frac{1}{f}$ (cf.\,\cite[Theorem 3.7 and Corollary 3.8]{AMBL}). This is shown by proving the existence of a differential operator $\delta \in \DD$ such that $\delta\left(1/f\right) = 1/f^p$, i.e., a differential operator that acts as the Frobenius homomorphism on $1/f$. The main result of this paper exhibits an algorithm that, given $f\in R$, produces a differential operator $\delta \in \DD$ such that $\delta\left(1/f\right) = 1/f^p$. We will call such a $\delta$ a {\it differential operator associated with $f$}. Our method is described in full details in Section \ref{the algorithm described in detail}. Moreover, this procedure has been implemented using the computer algebra system Macaulay2.

Assume that $\Char(k) = p >0$ and that $[k:k^p]<\infty$. For $e \gs 1$ let $R^{p^e}$ be the subring of $R$ consisting of all $p^e$-th powers of elements in $R$, which can also be viewed as the image of the $e$-th iteration of the Frobenius endomorphism $F:R \to R$. We set $R^{p^0} := R$. It is shown in \cite[1.4.9]{Yek} that $\DD$ is equal to the increasing union $ \bigcup_{e \gs 0} \End_{R^{p^e}}(R)$. Therefore, given $\delta \in \DD$, there exists $e \gs 0$ such that $\delta \in \End_{R^{p^e}}(R)$ but $\delta \notin \End_{R^{p^{e'}}}(R)$ for any $e' < e$. Given a non-zero polynomial $f \in R$, we have seen above that there exists $\delta \in \DD$ that is associated with $f$. We say that $f$ has level $e$ if such $\delta$ is $R^{p^e}$-linear, and there is no $R^{p^{e'}}$-linear differential operator $\delta'$, with $e'<e$, that is associated with $f$.

In Section \ref{monomial section}, we study the case when $f$ is a monomial; indeed, in Theorem \ref{monomial} we determine the level of $f$, and we give an explicit description of the differential operator $\delta$ associated with $f$. We also describe explicitly $I_e(f^{p^e-1})$, the ideal of $p^e$-th roots of $f^{p^e-1}$, where $e$ is the level of $f$. The ideal $I_e(f^{p^e-1})$ can be defined as the unique smallest ideal $J \subseteq R$ such that $f^{p^e-1} \in J^{[p^e]} = (j^{p^e} \mid j \in J)$ (see for example\, \cite[Definition 2.2]{BMS}). In Section \ref{section of level one} we present some families of polynomials which have level one, and we give some examples. In Section \ref{elliptic curves section} we focus on Elliptic Curves $\CC \subseteq \PP^2_{\FF_p}$, where $\FF_p$ is the finite field with $p$ elements. We prove the following characterization: 
\begin{theorem} Let $p \in \ZZ$ be a prime number and let $\CC \subseteq \PP^2_{\FF_p}$ be an elliptic curve defined by a cubic $f(x,y,z) \in \FF_p[x,y,z]$. Then
\begin{enumerate}[(i)]
  \setlength{\itemsep}{2pt}
  \setlength{\parskip}{0pt}
  \setlength{\parsep}{0pt}
\item  $\CC$ is ordinary if and only if $f$ has level one.
\item  $\CC$ is supersingular if and only if $f$ has level two.
\end{enumerate}
\end{theorem}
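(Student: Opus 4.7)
The plan is to translate the level condition into an ideal-membership problem. Since $R = \FF_p[x,y,z]$ is free over $R^{p^e}$ with basis $\{x^\alpha : 0 \le \alpha_i < p^e\}$, any $R^{p^e}$-linear $\delta : R \to R$ is determined by its values on this basis. Writing $f^{p^e-1} = \sum_\alpha u_\alpha^{p^e} x^\alpha$ and clearing $f^{p^e} \in R^{p^e}$ from the relation $\delta(1/f) = 1/f^p$, this becomes $\delta(f^{p^e-1}) = f^{p^e - p}$, which is solvable in $\delta$ if and only if $f^{p^e - p} \in (u_\alpha^{p^e}) = I_e(f^{p^e-1})^{[p^e]}$. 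Hence $f$ has level at most $e$ iff $f^{p^e - p} \in I_e(f^{p^e-1})^{[p^e]}$.

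\textbf{Step 2 (Part (i)).} For $e = 1$, homogeneity of $f^{p-1}$ (of degree $3(p-1)$) forces each $u_\alpha$ to be homogeneous of degree $(3(p-1) - |\alpha|)/p$. Since $|\alpha| \le 3(p-1)$ with equality only at $\alpha = (p-1, p-1, p-1)$, only this single $u_\alpha$ lies in $R_0 = \FF_p$; and as Frobenius is the identity on $\FF_p$, it equals the coefficient of $(xyz)^{p-1}$ in $f^{p-1}$, i.e., the Hasse invariant $H(f)$ of $\CC$. If $\CC$ is ordinary, then $H(f) \ne 0$, $u_{(p-1,p-1,p-1)}$ is a unit, $I_1(f^{p-1}) = R$, and $f$ has level $1$. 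If $\CC$ is supersingular, $H(f) = 0$ puts every $u_\alpha$ in $\m$, so $I_1(f^{p-1}) \subseteq \m$ and $1 \notin \m^{[p]}$, forcing the level to be at least $2$.

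\textbf{Step 3 (Part (ii) and main obstacle).} The converse in (ii) is immediate: if $f$ had level $1$ then $\CC$ would be ordinary by (i), so level $2$ forces supersingularity. The substantive task is the forward direction — showing that supersingular $\CC$ forces level at most $2$, i.e., $f^{p^2 - p} = (f^{p-1})^p \in I_2(f^{p^2-1})^{[p^2]}$. Taking $p$-th roots in the perfect ring $R$, this is equivalent to $I_1(f^{p-1}) \subseteq I_2(f^{p^2-1})$. Using $f^{p^2-1} = f^{p-1} \cdot (f^{p-1})^p$ and the identity $I_1(h \cdot g^p) = g \cdot I_1(h)$, one computes $I_1(f^{p^2-1}) = f^{p-1} \cdot J_1$ where $J_1 := I_1(f^{p-1})$; applying $I_1$ once more yields $I_2(f^{p^2-1}) = \sum_{u \in J_1} I_1(f^{p-1} u)$. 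The task thus reduces to the inclusion $J_1 \subseteq \sum_{u \in J_1} I_1(f^{p-1} u)$, and this is the main technical obstacle: the generators of $J_1$ all lie in $\m$ (since $H(f) = 0$), so the constituent ideals $I_1(f^{p-1} u)$ are not obviously large enough. My plan is to exploit smoothness of $\CC$ — equivalently, that the Jacobian ideal $(f_x, f_y, f_z)$ is $\m$-primary — to produce enough low-degree elements in $J_1$, and to verify the containment by a direct polynomial computation after reducing to Weierstrass form $f = y^2 z - x^3 - axz^2 - bz^3$, using that the supersingular locus in the $(a,b)$-moduli is cut out by a nonzero polynomial of ``second Hasse'' type whose nonvanishing away from the supersingular locus would be incompatible with $H(f) = 0$.
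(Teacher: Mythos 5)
Your Steps 1 and 2 are correct and match the paper's approach exactly: the equivalence between existence of $\delta\in\DD^{(e)}$ with $\delta(1/f)=1/f^p$ and $f^{p^e-p}\in I_e(f^{p^e-1})^{[p^e]}$ is the content of Theorem \ref{first step of the algorithm}, and the identification of $u_{(p-1,p-1,p-1)}$ as the Hasse invariant via homogeneity of $f^{p-1}$ is exactly Lemma \ref{sqfpowers}. Your reduction in Step 3 — taking a $p$-th root to turn the level-2 criterion into $I_1(f^{p-1})\subseteq I_2(f^{p^2-1})$, and then applying $I_1(g^p h)=g\,I_1(h)$ to get $I_2(f^{p^2-1})=\sum_{u\in J_1}I_1(f^{p-1}u)$ — is correct and is a clean reformulation that the paper does not state in this form.

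However, Step 3 stops at exactly the point where the real work begins. You announce a \emph{plan} to verify $J_1\subseteq\sum_{u\in J_1}I_1(f^{p-1}u)$ by ``a direct polynomial computation'' and a vague appeal to ``second Hasse'' polynomials and smoothness of $\CC$, but none of this is carried out, and as phrased it would not go through: the assertion that nonvanishing of some second-order invariant ``away from the supersingular locus would be incompatible with $H(f)=0$'' is circular (the supersingular locus \emph{is} $\{H(f)=0\}$) and does not identify a mechanism for producing the required elements of $I_2$. The paper's proof is a genuinely substantial computation that is entirely missing from your proposal: it handles $p=2,3$ separately by exhibiting explicit second-level operators (Proposition \ref{char_2_3}); for $p\ne 2,3$ it reduces to Weierstrass form $y^2z-x^3+axz^2+bz^3$, always gets $y\in I_2$ from the coefficient $c(0,p^2-2,p^2-1)$ (Lemma \ref{y}); disposes of the degenerate cases $ab=0$ (Lemmas \ref{cusp} and \ref{a_b_zero}, using Legendre's formula for $v_p(n!)$ to certify a central binomial coefficient is nonzero); and in the main case $ab\ne 0$ extracts four explicit degree-one coefficients $c(r,s,t)$ and shows by a $4\times 2$ rank computation — with a nontrivial cancellation identity relating $\lambda,\mu,\tau$ — that $(x,z)\subseteq I_2$. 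Only then does the homogeneity argument $I_1\subseteq(x,y,z)\subseteq I_2\subseteq I_1$ close the loop. To complete your proof you would need to produce that combinatorial content (or something equivalent); smoothness alone does not obviously deliver it, since the Jacobian generators live in $J_1$ but you still need them to lie in the a priori smaller ideal $I_2$, which is precisely the thing being proved.
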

All computations in this article are made using the computer software Macaulay2 \cite{M2}.

\section{Preliminaries}
The goal of this section is to review the definitions, notations and results that we use throughout this paper. Unless otherwise specified, $k$ will denote a perfect field of prime characteristic $p$. Under this assumption, it is known (see \cite[IV, Th\'eor\`eme 16.11.2]{EGA_4_4}) that the ring of $k$-linear differential operators over $R=k[x_1,\ldots,x_d]$ can be expressed in the following way:
\[
\ds \DD:=R \left\langle D_{x_i,t} \mid i=1,\ldots,d \mbox{ and } t \gs 1 \right\rangle, \ \ \mbox{ where }  D_{x_i,t} := \frac{1}{t!} \frac{\partial^t}{\partial x_i^t}.
\]
This allows us to regard $\DD$ as a filtered ring. Indeed, one has that
\[
\DD=\bigcup_{e\gs 0}\DD^{(e)},\text{ where }\DD^{(e)}:=R \left\langle D_{x_i,t} \mid i=1,\ldots,d \mbox{ and } 1\ls t\ls p^e-1 \right\rangle .
\]
Moreover, it is shown by A.\,Yekutieli (see \cite[1.4.9]{Yek}) that $\DD^{(e)}=\End_{R^{p^e}} (R)$, hence the previous filtration does not depend on the choice of coordinates.

Now, we fix additional notation; given an $\mathbf{\alpha}=(a_1,\ldots ,a_d)\in\NN^d$ we shall use the following multi-index notation:
\[
\mathbf{x}^{\mathbf{\alpha}}:=x_1^{a_1}\cdots x_d^{a_d}.
\]
With this notation, we set $\norm{\mathbf{x}^{\mathbf{\alpha}}}:=\max\{a_1,\ldots ,a_d\}$. By abuse of notation, we will sometimes also use $\norm{\mathbf{\alpha}}$ instead of $\norm{\mathbf{x}^{\mathbf{\alpha}}}$. For any polynomial $g\in k [x_1,\ldots ,x_d]$, we define
\[
\norm{g}:=\max_{\mathbf{x}^{\mathbf{\alpha}}\in\supp (g)} \norm{\mathbf{x}^{\mathbf{\alpha}}},
\]
where if $g=\sum_{\mathbf{\alpha}\in\NN^d} g_{\mathbf{\alpha}} \mathbf{x}^{\mathbf{\alpha}}$ (such that $g_{\mathbf{\alpha}}=0$ for all but a finite number of terms) the support of $g$ is defined as
\[
\supp (g):=\left\{x^{\mathbf{\alpha}}\in R\mid\ g_{\mathbf{\alpha}}\neq 0\right\}.
\]
Moreover, we also define $\deg (g)$ as the total degree of $g$. Finally, for any ideal $J \subseteq R$, $J^{[p^e]}$ will denote the ideal generated by all the $p^e$-th powers of elements in $J$, or equivalently the ideal generated by the $p^e$-th powers of any set of generators of $J$.

\subsection{The ideal of $p^e$-th roots}
Due to the central role which the ideal of $p^e$-th roots plays throughout this article, we review some well-known definitions and facts (cf.\,\cite[page 465]{AMBL} and \cite[Definition 2.2]{BMS}).

\begin{definition}
Given $g\in R$, we set $I_e (g)$ to be the smallest ideal $J\subseteq R$ such that $g\in J^{[p^e]}$.
\end{definition}
\begin{remark} Assume that $k$ is perfect. In our assumptions, the ring $R$ is a free $R^{p^e}$-module, with basis given by the monomials $\{\mathbf{x}^{\mathbf{\alpha}} \mid \norm{\alpha} \ls p^e-1\}$. If we write
\[
g=\sum_{0\ls\norm{\mathbf{\alpha}}\ls p^{e}-1}g_{\mathbf{\alpha}}^{p^e} \mathbf{x}^{\alpha},
\]
then $I_e (g)$ is the ideal of $R$ generated by all the elements $g_{\mathbf{\alpha}}$ \cite[Proposition 2.5]{BMS}.
\end{remark}

\begin{remark} \label{I_e homog} Notice that, if $g$ is a homogeneous polynomial, then, for all $e \in \NN$, $I_e(g)$ is a homogeneous ideal. Indeed, if we write $g=\sum_{0\ls\norm{\mathbf{\alpha}}\ls p^{e}-1}g_{\mathbf{\alpha}}^{p^e} \mathbf{x}^{\alpha}$, then we can assume without loss of generality that every $g_\alpha^{p^e} \mathbf{x}^{\alpha}$ has degree equal to $\deg(g)$. But then $g_\alpha$ must be homogeneous of degree
\[
\ds \deg(g_\alpha) = \frac{\deg(g)-\deg(\mathbf{x}^{\alpha})}{p^e}.
\]
Since $I_e(g)$ is generated by the the elements $g_\alpha$, it is a homogeneous ideal.
\end{remark}

We have the following easy properties (see \cite[Lemma 3.2 and Lemma 3.4]{AMBL} for details).

\begin{proposition}\label{propiedades del ideal raiz}
Given $f\in R$ a non-zero polynomial, and given $e\gs 0$, the following statements hold.

\begin{enumerate}[(i)]

\item $I_e (f)=I_{e+1} (f^p)$.

\item $I_e (f^{p^e-1})\supseteq I_{e+1} (f^{p^{e+1}-1})$.

\end{enumerate}

\end{proposition}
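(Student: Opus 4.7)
The plan is to leverage the explicit basis description from the preceding remark. Since $k$ is perfect, $R$ is free over $R^{p^e}$ on the monomials $\{\mathbf{x}^{\mathbf{\alpha}} \mid \norm{\mathbf{\alpha}} \ls p^e - 1\}$, so every $g \in R$ has a unique expansion $g = \sum_{\norm{\mathbf{\alpha}}\ls p^e - 1} g_{\mathbf{\alpha}}^{p^e} \mathbf{x}^{\mathbf{\alpha}}$, and $I_e(g)$ is generated by the coefficients $g_{\mathbf{\alpha}}$. My strategy will be to establish each statement by computing both sides in this canonical form.

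For (i), I would start from the expansion $f = \sum_{\norm{\mathbf{\alpha}}\ls p^e - 1} f_{\mathbf{\alpha}}^{p^e} \mathbf{x}^{\mathbf{\alpha}}$ and raise it to the $p$-th power. Since Frobenius is additive in characteristic $p$, this yields $f^p = \sum f_{\mathbf{\alpha}}^{p^{e+1}} \mathbf{x}^{p\mathbf{\alpha}}$. The crucial numerical check is that the new exponent vectors satisfy $\norm{p\mathbf{\alpha}} = p\norm{\mathbf{\alpha}} \ls p(p^e - 1) = p^{e+1} - p < p^{e+1}$, so each $\mathbf{x}^{p\mathbf{\alpha}}$ lies in the distinguished free $R^{p^{e+1}}$-basis, and distinct $\mathbf{\alpha}$ give distinct basis monomials. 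The displayed identity is therefore already the canonical expansion of $f^p$ at level $e+1$ (with zero coefficients for basis monomials not of the form $\mathbf{x}^{p\mathbf{\alpha}}$), so reading off the generators directly gives $I_{e+1}(f^p) = (f_{\mathbf{\alpha}}) = I_e(f)$.

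For (ii), I plan to exploit the identity $p^{e+1} - 1 = p(p^e - 1) + (p - 1)$ together with the trivial observation $f^{p-1} \in R$. Setting $J := I_e(f^{p^e - 1})$, the defining property of $I_e$ gives $f^{p^e - 1} \in J^{[p^e]}$, so raising to the $p$-th power yields $(f^{p^e - 1})^p \in J^{[p^{e+1}]}$. Multiplying by $f^{p-1}$ and using that $J^{[p^{e+1}]}$ is an ideal, I conclude $f^{p^{e+1} - 1} = f^{p-1}(f^{p^e - 1})^p \in J^{[p^{e+1}]}$. The minimality in the definition of $I_{e+1}$ then forces $I_{e+1}(f^{p^{e+1} - 1}) \subseteq J$, which is exactly the claimed containment.

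The only real technical point is the verification in (i) that $\sum f_{\mathbf{\alpha}}^{p^{e+1}} \mathbf{x}^{p\mathbf{\alpha}}$ really is the canonical expansion at level $e+1$; this reduces to the sharp numerical bound $p(p^e-1) < p^{e+1}$ and the injectivity of $\mathbf{\alpha} \mapsto p\mathbf{\alpha}$. Everything else amounts to routine manipulations with ideals and the Frobenius endomorphism.
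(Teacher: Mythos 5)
Your proof is correct. The paper itself does not include a proof of this proposition---it simply cites Lemmas 3.2 and 3.4 of \`Alvarez Montaner--Blickle--Lyubeznik---so there is no internal proof in this document to compare against. That said, your argument is exactly the standard one: part (i) follows by applying Frobenius to the canonical $R^{p^e}$-expansion and observing that $\mathbf{\alpha}\mapsto p\mathbf{\alpha}$ injects the level-$e$ basis exponents into the level-$(e+1)$ range (since $p(p^e-1)=p^{e+1}-p\ls p^{e+1}-1$), so the coefficients are preserved; part (ii) follows from the factorization $f^{p^{e+1}-1}=f^{p-1}\,(f^{p^e-1})^p$, the observation that $\bigl(J^{[p^e]}\bigr)^{[p]}=J^{[p^{e+1}]}$, and the minimality in the definition of $I_{e+1}$. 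Both steps are airtight, and you correctly identified the two small technical points that need verification (the exponent bound and the injectivity in (i), and the ideal membership in (ii)). Nothing is missing.
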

Note that part (ii) of Proposition \ref{propiedades del ideal raiz} produces the following decreasing chain of ideals:
\begin{equation}\label{cadena incordio}
R=I_0 (f^{p^0-1})\supseteq I_1 (f^{p-1})\supseteq I_2(f^{p^2-1})\supseteq I_3 (f^{p^3-1})\supseteq\ldots
\end{equation}
It is shown in \cite{AMBL} that under our assumptions this chain stabilizes. The smallest integer $e \in \NN$ where the chain stabilizes plays a central role in this paper. We summarize the facts that we will need in the following theorem. See \cite[Proposition 3.5, and Theorem 3.7]{AMBL} for details and proofs. 

\begin{theorem}\label{first step of the algorithm}
Let $k$ be a perfect field of prime characteristic $p$, let $R=k[x_1,\ldots ,x_d]$, and let $f\in R \smallsetminus \{0\}$. Define
\[
\ds e:=\inf\left\{s\gs 1\mid\ I_{s-1}\left(f^{p^{s-1}-1}\right)=I_s\left(f^{p^s-1}\right)\right\}.
\]
Then, the following assertions hold.
\begin{enumerate}[(i)]

\item The chain of ideals \eqref{cadena incordio} stabilizes rigidly, that is $e < \infty$ and $I_{e-1}\left(f^{p^{e-1}-1}\right)=I_{e+s} \left(f^{p^{e+s}-1}\right)$ for any $s\gs 0$.

\item One has
\[
e=\min\left\{s\gs 1\mid\ f^{p^s-p}\in I_s\left(f^{p^s-1}\right)^{[p^s]}\right\},
\]
and $e\ls\deg(f)$.

\item There exists $\delta\in\DD^{(e)}$ such that $\delta(f^{p^e-1}) = f^{p^e-p}$, or equivalently such that $\delta (1/f)=1/f^p$.

\item There is no $\delta '\in\DD^{(e')}$, with $e'<e$, such that $\delta ' (1/f)=1/f^p$.
\end{enumerate}
\end{theorem}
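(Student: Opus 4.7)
The plan rests on translating between the ideal-theoretic side (the chain $I_s(f^{p^s-1})$) and the operator-theoretic side (existence of $\delta \in \DD^{(s)}$ with $\delta(1/f) = 1/f^p$). The translation is via the freeness of $R$ as an $R^{p^s}$-module with basis $\{\mathbf{x}^{\mathbf{\alpha}} : \norm{\mathbf{\alpha}} \ls p^s - 1\}$, so that any $\delta \in \DD^{(s)} = \End_{R^{p^s}}(R)$ is uniquely determined by arbitrary prescriptions on this basis, and conversely any expansion $g = \sum g_{\mathbf{\alpha}}^{p^s} \mathbf{x}^{\mathbf{\alpha}}$ exhibits the generators $g_{\mathbf{\alpha}}$ of $I_s(g)$.

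First I would establish the following key equivalence: for every $s \gs 1$,
$$I_{s-1}(f^{p^{s-1}-1}) = I_s(f^{p^s-1}) \iff f^{p^s-p} \in I_s(f^{p^s-1})^{[p^s]}.$$
Indeed, Proposition \ref{propiedades del ideal raiz}(i) applied to $f^{p^{s-1}-1}$ gives $I_{s-1}(f^{p^{s-1}-1}) = I_s((f^{p^{s-1}-1})^p) = I_s(f^{p^s-p})$, while Proposition \ref{propiedades del ideal raiz}(ii) supplies the automatic containment $I_{s-1}(f^{p^{s-1}-1}) \supseteq I_s(f^{p^s-1})$. Thus the equality of the two ideals is equivalent to the reverse containment $I_s(f^{p^s-p}) \subseteq I_s(f^{p^s-1})$, which by the universal property of $I_s$ is equivalent to $f^{p^s-p} \in I_s(f^{p^s-1})^{[p^s]}$. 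This immediately yields the characterization in (ii).

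Assuming finiteness of $e$, part (iii) follows constructively: expand $f^{p^e-1} = \sum f_{\mathbf{\alpha}}^{p^e} \mathbf{x}^{\mathbf{\alpha}}$ and use $f^{p^e-p} \in I_e(f^{p^e-1})^{[p^e]}$ to write $f^{p^e-p} = \sum h_{\mathbf{\alpha}} f_{\mathbf{\alpha}}^{p^e}$ for some $h_{\mathbf{\alpha}} \in R$. Define $\delta \in \DD^{(e)}$ by $\delta(\mathbf{x}^{\mathbf{\alpha}}) := h_{\mathbf{\alpha}}$ on the basis, extended $R^{p^e}$-linearly. Then $\delta(f^{p^e-1}) = f^{p^e-p}$; since $\delta$ is $R^{p^e}$-linear on $R_f$, this is equivalent to $\delta(1/f) = 1/f^p$ after dividing by $f^{p^e}$. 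Conversely, for (iv), any $\delta' \in \DD^{(e')}$ with $\delta'(1/f) = 1/f^p$ satisfies $\delta'(f^{p^{e'}-1}) = f^{p^{e'}-p}$, and $R^{p^{e'}}$-linearity forces the left side into $I_{e'}(f^{p^{e'}-1})^{[p^{e'}]}$; the equivalence above then gives $I_{e'-1}(f^{p^{e'-1}-1}) = I_{e'}(f^{p^{e'}-1})$, whence $e' \gs e$ by definition of $e$.

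The main obstacle is (i), which requires both finiteness of $e$ and rigid stabilization. The rigidity half is easy once finiteness is in hand: the $\delta \in \DD^{(e)}$ produced for (iii) also lies in $\DD^{(e+k)}$ for every $k \gs 0$, since $R^{p^{e+k}} \subseteq R^{p^e}$; rerunning the argument of (iv) gives $f^{p^{e+k}-p} \in I_{e+k}(f^{p^{e+k}-1})^{[p^{e+k}]}$ for every $k \gs 0$, which via the key equivalence yields $I_{e+k-1}(f^{p^{e+k-1}-1}) = I_{e+k}(f^{p^{e+k}-1})$ for all $k \gs 0$. The hard part is the finiteness statement $e \ls \deg(f)$: this cannot be extracted from the formal equivalences above, since a priori a decreasing chain of ideals in the Noetherian ring $R$ need not stabilize. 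The approach of \cite{AMBL} exploits the fact (already visible in Remark \ref{I_e homog}) that the generators of $I_s(f^{p^s-1})$ have degrees bounded in terms of $\deg(f)$ independently of $s$; a careful analysis of this bounded-degree geometry forces stabilization within $\deg(f)$ steps.
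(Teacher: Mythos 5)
The paper does not actually prove Theorem~\ref{first step of the algorithm}; it cites \cite{AMBL} (Proposition~3.5 and Theorem~3.7) and moves on. So there is no in-paper proof to compare against, and your attempt supplies more detail than the paper itself offers. What you do prove is correct: the key equivalence $I_{s-1}(f^{p^{s-1}-1}) = I_s(f^{p^s-1}) \iff f^{p^s-p}\in I_s(f^{p^s-1})^{[p^s]}$ follows cleanly from Proposition~\ref{propiedades del ideal raiz} and the minimality defining $I_s$; this gives the characterization of $e$ in (ii), the construction of $\delta$ in (iii) via freeness of $R$ over $R^{p^e}$, the non-existence in (iv) by reversing that construction, and the rigidity half of (i) by promoting the $\delta$ from (iii) into $\DD^{(e+k)}$ and feeding it back through the (iv) argument. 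These are exactly the formal maneuvers one expects, and you execute them correctly.

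The genuine gap is the one you flag yourself: finiteness of $e$ together with the quantitative bound $e\ls\deg(f)$. Two cautions here. First, this is not a cosmetic omission — the whole theorem hinges on $e<\infty$, since (iii) and the rigidity in (i) both presuppose it, and without it you have only the formal equivalences, not an actual differential operator. Second, your sketch of the missing step is slightly too optimistic: the observation that the generators of $I_s(f^{p^s-1})$ lie in degrees at most $\deg(f)-1$ does give stabilization (a decreasing chain of ideals each generated in bounded degree is controlled by a decreasing chain of finite-dimensional graded pieces), but it only bounds $e$ by something like $\dim_k R_{\ls \deg(f)-1}$, not by $\deg(f)$. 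The sharp bound $e\ls\deg(f)$ in \cite{AMBL} requires a finer argument than ``bounded degree implies stabilization within $\deg(f)$ steps.'' Since the paper also leaves this to \cite{AMBL}, your attempt and the paper share the same black box, but you should not give the impression that the degree bound alone closes it.
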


Motivated by Theorem \ref{first step of the algorithm}, we make the following definition.
\begin{definition} For a non-zero polynomial $f \in R$, we call the integer $e$ defined in Theorem \ref{first step of the algorithm} the {\it level of $f$}. Also, we will say that $\delta \in \DD^{(e)}$ such that $\delta(f^{p^e-1}) = f^{p^e-p}$, or equivalently such that $\delta(1/f) = 1/f^p$, is a differential operator {\it associated with $f$}.
\end{definition}

\section{The algorithm}\label{the algorithm described in detail}
Let $k$ be a computable perfect field of prime characteristic $p$ (e.g., $k$ is finite). Let $R=k[x_1,\ldots ,x_d]$, and let $f\in R$ be a non-zero polynomial. We now describe in details the algorithm that computes a differential operator $\delta \in \DD$ associated with $f$.
\begin{itemize}
\item {\bf \underline{Step 1}.} Find the smallest integer $e \in \NN$ such that $I_e(f^{p^e-p}) =  I_e(f^{p^e-1})$. There is an implemented algorithm for the computation of the level of a given polynomial $f \in R$. Here follows a description:
\begin{algorithm}\label{calculo del nivel}
Let $k$ be a computable perfect field of prime characteristic $p$ (e.g., finite), let $R=k[x_1,\ldots ,x_d]$, and let $f\in R$. These data act as the input of the procedure. Initialize $e=0$ and $flag=true$. While $flag$ has the value $true$, execute the following commands:

\begin{enumerate}[(i)]

\item Assign to $e$ the value $e+1$, and to $q$ the value $p^e$.

\item Compute $I_e (f^{q-1})$.

\item Assign to $J$ the value $I_e \left(f^{q-1}\right)^{[q]}$. 

\item If $f^{q-p}\in J$, then $flag=false$; otherwise, come back to step (i).

\end{enumerate}
At the end of this method, return the pair $\left(e,I_e\left(f^{p^e-1}\right)\right)$. Such $e$ is exactly the $e$ described in Theorem \ref{first step of the algorithm}, i.e., the level of $f$.
\end{algorithm}

\begin{remark}
Since the level $e$ is always at most $\deg(f)$, we can ensure that the While loop in Algorithm \ref{calculo del nivel} finishes after, at most, $\deg(f)$ iterations. Notice that, a priori, there is a black box in this algorithm, namely the computation of $I_e (f^{q-1})$ at each step. However, the calculation of the ideal of $p^e$-th roots is well known (cf. \cite[Section 6]{KatzmanSchwede2012}).
\end{remark}

\begin{remark}
As pointed out by E.\,Canton in \cite[Definition 2.3]{Canton15}, the so-called \emph{non-F-pure ideal} of $f$ introduced by O.\,Fujino, K.\,Schwede and S.\,Takagi in \cite[Definition 14.4]{FujinoSchwedeTakagi11} turns out to be $I_e \left(f^{p^e-1}\right)$, where $e$ is the level of $f$ (see \cite[Remark 16.2]{FujinoSchwedeTakagi11}). Therefore, Algorithm \ref{calculo del nivel} provides a procedure to calculate the non-F-pure ideal.
\end{remark}

\item {\bf \underline{Step 2}.} For $e \in \NN$ as in {\bf Step 1} write $f^{p^e-1} = \sum_{i=1}^n c_i^{p^e} \mu_i$, where $\{\mu_1,\ldots,\mu_n\}$ is the basis of $R$ as an $R^{p^e}$-module consisting of all the monomials $x_1^{a_1}\cdots x_d^{a_d}$, with $a_i\ls p^e-1$ for all $i=1,\ldots,d$. 
\begin{claim}\label{Dirac delta claim} For all $i =1,\ldots,n$ there exists $\delta_i \in \DD^{(e)}$ such that $\delta_i(\mu_j) = 1$ if $i=j$ and $\delta_i(\mu_j) = 0$ if $i\ne j$.
\end{claim}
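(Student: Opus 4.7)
The plan is to exploit two pieces of structure that are already recorded in the Preliminaries: first, the fact that $R$ is a free $R^{p^e}$-module with basis $\{\mu_1,\ldots,\mu_n\}$ consisting of the monomials $x_1^{a_1}\cdots x_d^{a_d}$ with each $a_i \leq p^e - 1$; and second, Yekutieli's identification $\DD^{(e)} = \End_{R^{p^e}}(R)$.

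Given these two facts, the claim reduces to a standard dual basis construction. For each fixed $i \in \{1,\ldots,n\}$, I would define the $R^{p^e}$-linear map $\delta_i : R \to R$ by prescribing its values on the free $R^{p^e}$-basis via $\delta_i(\mu_j) = 1$ if $j = i$ and $\delta_i(\mu_j) = 0$ otherwise, and then extending $R^{p^e}$-linearly. Since $\{\mu_1,\ldots,\mu_n\}$ is a basis, this extension exists and is unique, so $\delta_i \in \End_{R^{p^e}}(R)$. Applying Yekutieli's equality $\End_{R^{p^e}}(R) = \DD^{(e)}$ then yields $\delta_i \in \DD^{(e)}$, which is exactly the conclusion of the claim.

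There is essentially no obstacle here: the only ingredient beyond the definition of a dual basis is invoking $\DD^{(e)} = \End_{R^{p^e}}(R)$, which has already been cited from \cite[1.4.9]{Yek}. I would just remark, for the reader's convenience, that although the $\delta_i$ are defined abstractly as $R^{p^e}$-linear maps, they can in fact be written down explicitly as $k$-linear combinations of the iterated divided-power operators $D_{x_\ell,t}$ with $t \leq p^e - 1$, which is how they will be used in subsequent steps of the algorithm. This explicit form, however, is not needed for the proof of the claim itself.
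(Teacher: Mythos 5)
Your proof is correct but takes a genuinely different route from the paper. You argue abstractly: since $\{\mu_1,\ldots,\mu_n\}$ is a free $R^{p^e}$-basis of $R$, an $R^{p^e}$-linear map is uniquely determined by its values on the basis, so the ``dual basis'' operators $\delta_i$ exist in $\End_{R^{p^e}}(R)$, and then Yekutieli's equality $\End_{R^{p^e}}(R)=\DD^{(e)}$ finishes the job. The paper instead writes $\delta_i$ down explicitly, as $\delta_i=\bigl(\prod_{k=1}^d D_{x_k,p^e-1}\bigr)\cdot\nu_i$ where $\nu_i$ is the complementary monomial with $\nu_i\mu_i=(x_1\cdots x_d)^{p^e-1}$, and verifies the vanishing $\delta_i(\mu_j)=0$ for $j\neq i$ by a binomial-coefficient computation using Lucas's theorem on base-$p$ digits.

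The comparison is worth noting. Your argument is shorter and conceptually cleaner, and it does prove the claim exactly as stated. What the paper's constructive proof buys, and what your abstract existence proof does not deliver, is the closed-form expression for $\delta_i$. This is not merely cosmetic: Claim 3.2 sits inside Step 2 of an algorithm whose entire purpose is to \emph{compute} a differential operator $\delta$ with $\delta(1/f)=1/f^p$, and Step 3 then literally substitutes the formula $\delta_i=\bigl(\prod_k D_{x_k,p^e-1}\bigr)\cdot\nu_i$ into $\delta=\sum_i\alpha_i\delta_i$ before it is handed to Macaulay2. You flag this yourself at the end, which is the right instinct; but in the context of this paper the ``remark for the reader's convenience'' is in fact the load-bearing content, and a referee would likely ask you to keep the explicit construction rather than relegate it to an aside. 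If the goal were only to establish existence (as in the Àlvarez-Montaner--Blickle--Lyubeznik theorem being cited earlier), your argument would be entirely adequate and arguably preferable.
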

\begin{proof} For $i \in \{1,\ldots,n\}$ and $\mu_i = x_1^{a_1}\cdots x_d^{a_d}$ consider $\nu_i:=x_1^{p^e-1-a_1}\cdots x_d^{p^e-1-a_d}$, which is a monomial in $R$ because $a_k \ls p^e-1$ for all $k=1,\ldots,d$. Notice that $\nu_i \mu_j = (x_1\cdots x_d)^{p^e-1}$ if and only if $i=j$. Then set 
\[
\ds \delta_i := \left( \prod_{k=1}^d D_{x_k,p^e-1} \right) \cdot \nu_i \in \DD^{(e)}.
\]
Notice that $\delta_i(\mu_i) = 1$, and that if $\mu_j=x_1^{b_1}\cdots x_d^{b_d}$, then $\delta_i(\mu_j) = 0$ if $b_k < a_k$ for some $k\in\{1,\ldots,d\}$. So let us assume that $a_k \ls b_k \ls p^e-1$ for all $k=1,\ldots,d$, and that there is $s\in \{1,\ldots,d\}$ such that $a_s<b_s$. Note that by definition of $\nu_i$ we have that $\nu_i\mu_j = x_1^{r_1}\cdots x_d^{r_d}$, with $p^e \ls r_s \ls 2p^e-2$, so that we can write $r_s = p^e+n$ for some integer $n$ with $0 \ls n \ls p^e-2$. 
\begin{subclaim}The coefficient of $D_{x_s,p^e-1}(\nu_i\mu_j)$ is  
\[
{p^e+n \choose p^e-1}  \equiv 0  \mod p.
\]
\end{subclaim}
\begin{proof} As a consequence of a theorem proved by Lucas in \cite[pp.\,51--52]{Lucas}, it is enough to check that at least one of the digits of the base $p$ expansion of $p^e-1$ is greater than the corresponding digit in the base $p$ expansion of $p^e+n$. The base $p$ expansion of $p^e-1$ is given by
\[
p^e-1 = (p-1) (1+p+\cdots+ p^{e-1}) = (p-1)p^0 + (p-1)p^1 + \cdots + (p-1)p^{e-1},
\]
so that the subclaim is proved unless the first $e$ digits of $p^e+n$ are $p-1$ as well. But in this case, since $p^e+n>p^e-1$ we would get
\[
\ds p^e+n \gs (p-1)p^0 + (p-1)p^1 + \cdots + (p-1)p^{e-1} + p^e = 2p^e-1,
\]
a contradiction since $n \ls p^e-2$.
\end{proof}
The Subclaim shows that $D_{x_s,p^e-1}(\nu_i\mu_j) = 0$ for all $\mu_j$ with $j \ne i$. Therefore, using that $\delta_i \in \DD^{(e)} = \End_{R^{p^e}}(R)$, we get
\[
\ds \delta_i(f^{p^e-1}) = \delta_i\left(\sum_{j=1}^n c_j^{p^e} \mu_j\right)=\sum_{j=1}^n c_j^{p^e} \delta_i(\mu_j) = c_i^{p^e}.\qedhere
\]
\end{proof}
\item {\bf \underline{Step 3}.} Since $1 \in \DD^{(e)}$, for $e \in \NN$ as in {\bf Step 1} we have
\[
f^{p^e-p} \in \DD^{(e)}(f^{p^e-p}) = I_e(f^{p^e-p})^{[p^e]} = I_e(f^{p^e-1})^{[p^e]} = (c_1,\ldots,c_n)^{[p^e]}.
\]
In particular there exist $\alpha_1,\ldots,\alpha_n \in R$ such that $f^{p^e-p} = \sum_{i=1}^n \alpha_i c_i^{p^e}$. Consider $\delta_i \in \DD^{(e)}$ as in {\bf Step 2}, so that $\delta_i(f^{p^e-1}) = c_i^{p^e}$, and set
\[
\ds \delta:= \sum_{i=1}^n \alpha_i \delta_i \in \DD^{(e)}.
\]
With this choice we have
\[
\ds \delta(f^{p^e-1}) = \delta\left(\sum_{j=1}^n c_j^{p^e} \mu_j\right) = \sum_{i,j=1}^n c_j^{p^e}\alpha_i \delta_i(\mu_j) = \sum_{i=1}^n \alpha_i c_i^{p^e} = f^{p^e-p},
\]
and using that $\delta \in \DD^{(e)}$ we finally get
\[
\ds \delta\left(\frac{1}{f}\right) = \delta \left(\frac{f^{p^e-1}}{f^{p^e}}\right) = \frac{1}{f^{p^e}} \delta \left(f^{p^e-1}\right) = \frac{f^{p^e-p}}{f^{p^e}} = \frac{1}{f^p}.
\]
\end{itemize}
\section{The monomial case}\label{monomial section} 
Throughout this section, let $k$ be a perfect field and let $R=k[x_1,\ldots,x_d]$. We now analyze the case when $f \in R$ is a monomial. First we show a lower bound for the level of $f$.
\begin{lemma}\label{lower bound of the level}Let $f=x_1^{a_1}\cdots x_d^{a_d}$ be a monomial in $R=k[x_1,\ldots,x_d]$, with $a_i >0$ for all $i=1,\ldots,d$. Let $\delta \in \DD^{(e)}$ be such that $ \delta\left(1/f\right) = 1/f^p$. Then, setting $a:=\norm{f} = \max \{a_i \mid 1 \ls i \ls d\}$, we have
\[
\ds e \gs \left\lceil \log_p(a) \right\rceil +1.
\]
\end{lemma}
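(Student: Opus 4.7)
The plan is to translate the hypothesis into an ideal containment for the monomial $f$, and then observe that since everything in sight is a monomial, the containment collapses to a numerical inequality on the exponents. First I would note that $\delta \in \DD^{(e)} = \End_{R^{p^e}}(R)$ acts $R^{p^e}$-linearly on $R$, and since $f^{p^e} \in R^{p^e}$, the hypothesis $\delta(1/f) = 1/f^p$ is equivalent to $\delta(f^{p^e-1}) = f^{p^e-p}$. Expanding $f^{p^e-1} = \sum c_{\mathbf{\alpha}}^{p^e}\mathbf{x}^{\mathbf{\alpha}}$ in the free $R^{p^e}$-basis $\{\mathbf{x}^{\mathbf{\alpha}} : \norm{\mathbf{\alpha}} \le p^e-1\}$ and using $R^{p^e}$-linearity, the image of $f^{p^e-1}$ under any element of $\DD^{(e)}$ lies in the ideal generated by the $c_{\mathbf{\alpha}}^{p^e}$, namely $I_e(f^{p^e-1})^{[p^e]}$; hence the hypothesis yields $f^{p^e-p} \in I_e(f^{p^e-1})^{[p^e]}$.

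Next, I would compute $I_e(f^{p^e-1})$ explicitly for $f = x_1^{a_1}\cdots x_d^{a_d}$. Writing $a_i(p^e-1) = q_i p^e + r_i$ with $0 \le r_i \le p^e-1$, one has
\[
f^{p^e-1} \;=\; (x_1^{q_1}\cdots x_d^{q_d})^{p^e}\cdot x_1^{r_1}\cdots x_d^{r_d},
\]
which is already the expansion of $f^{p^e-1}$ in the $R^{p^e}$-basis (only a single monomial appears). Therefore $I_e(f^{p^e-1})$ is the principal ideal $(x_1^{q_1}\cdots x_d^{q_d})$, and the containment becomes the coordinate-wise system of inequalities $a_i(p^e-p) \ge q_i p^e$ for $i = 1,\ldots,d$. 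A short computation using $q_i = \lfloor a_i(p^e-1)/p^e\rfloor$ gives $a_i - q_i = \lceil a_i/p^e\rceil$, so each inequality rewrites as $\lceil a_i/p^e\rceil\, p^e \ge a_i\, p$.

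Finally, I would specialize to the index $i$ with $a_i = a$. If $a \le p^e$, then $\lceil a/p^e\rceil = 1$ and the inequality directly yields $a \le p^{e-1}$. If $a > p^e$, I would write $a = kp^e + s$ with $k \ge 1$ and $0 \le s < p^e$, and check that the case $s = 0$ reduces to the absurdity $1 \ge p$, while the case $s \ge 1$ gives $(1 - k(p-1))p^e \ge sp$, impossible for $p \ge 2$, $k \ge 1$, $s \ge 1$; so this range of $a$ cannot occur. Hence $a \le p^{e-1}$, equivalently $e \ge \lceil \log_p(a)\rceil + 1$. The only nontrivial step in the argument is the observation that for a monomial $f$, the ideal $I_e(f^{p^e-1})$ is principal with explicit generator $x_1^{q_1}\cdots x_d^{q_d}$; once that structural fact is in place, the rest is a routine numerical case analysis depending on the size of $a$ relative to $p^e$.
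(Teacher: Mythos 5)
Your proof is correct, and it takes a somewhat different route than the paper's, though both rest on the same key computation — that for a monomial, the ideal of $p^e$-th roots is again principal and monomial. The paper fixes $t := \lceil \log_p(a)\rceil$ (so $p^t \ge a_i$ for all $i$), computes $I_t(f^{p^t-1}) = (x_1^{a_1-1}\cdots x_d^{a_d-1})$ and $I_t(f^{p^t-p})$ explicitly, shows the latter strictly contains the former, and then invokes the rigid stabilization of the chain $I_s(f^{p^s-1})$ from Theorem \ref{first step of the algorithm}(i) to conclude that the level of $f$ is at least $t+1$; the bound on $e$ then follows from part (iv) of that theorem. You instead work directly at the given $e$: from $\delta \in \DD^{(e)}$ and $\delta(1/f)=1/f^p$ you extract the membership $f^{p^e-p} \in I_e(f^{p^e-1})^{[p^e]}$, compute $I_e(f^{p^e-1}) = (x_1^{q_1}\cdots x_d^{q_d})$ with $q_i = a_i - \lceil a_i/p^e\rceil$, and translate the divisibility into the inequality $\lceil a_i/p^e\rceil p^e \ge a_i p$, which you then analyze case by case. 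Your argument is more self-contained — it bypasses the chain-rigidity machinery entirely — at the modest cost of a slightly longer numerical case analysis. The paper's approach has the advantage that the computation of $I_t(f^{p^t-1})$ at the threshold $t$ is reused verbatim in Theorem \ref{monomial} to identify the non-F-pure ideal $(x_1^{a_1-1}\cdots x_d^{a_d-1})$, so the two proofs share work; your version would require an extra (trivial) remark to recover that description. One small presentational point: your case analysis implicitly assumes $e \ge 1$; the paper's handles the degenerate $t=0$ situation (all $a_i = 1$) explicitly. In your framework, $e=0$ is excluded because $\DD^{(0)} = R$ acts by multiplication and cannot send $1/f$ to $1/f^p$, so this is harmless, but worth a sentence.
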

\begin{proof} It suffices to show that for $t :=  \left\lceil \log_p(a)\right\rceil$ we have $I_{t}(f^{p^{t}-p}) \supsetneq I_{t}(f^{p^{t}-1})$. This is because if the chain 
\[
\ds I_t(f^{p^t-p^{t-1}}) \supseteq I_t(f^{p^t-p^{t-2}}) \supseteq \ldots \supseteq I_t(f^{p^t-p}) \supsetneq I_t(f^{p^t-1})
\]
stabilizes before such step, then it would be stable at it as well, and the smallest $s$ such that $I_s(f^{p^s-p}) = I_s(f^{p^s-1})$ is precisely the level of $f$ (see Theorem \ref{first step of the algorithm}). Notice that $t=0$ if and only if $a_i=1$ for all $i$. For such a monomial the Lemma is trivially true. So let us assume that $t \gs 1$, or equivalently that $a_i \gs 2$ for at least one $i \in \{1,\ldots, d\}$. Let 
\[
\ds j_i:= \min\{j \in \NN \mid jp^t \gs  a_i\},
\]
and notice that $j_i \ls a_i$ for all $i$, and by choice of $t$ we have that $j_i \gs 2$ for at least one $i$, say $j_1 \gs 2$. Then
\[
f^{p^t-p} = x_1^{p^ta_1-pa_1} \cdots x_d^{p^ta_d - pa_d} = (x_1^{a_1-j_1}\cdots x_d^{a_d-j_d})^{p^t} \cdot x_1^{j_1p^t-pa_1}\cdots  x_d^{j_dp^t-pa_d}.
\]
Since $(j_i-1)p^t<a_i$ by definition of $j_i$, we have $j_ip^t-pa_i < p^t+a_i-pa_i < p^t$. This shows that $I_t(f^{p^t-p}) = (x_1^{a_1-j_1}\cdots x_d^{a_d-j_d})$. On the other hand:
\[
f^{p^t-1} = x_1^{p^ta_1-a_1} \cdots x_d^{p^ta_d - a_d} = (x_1^{a_1-1}\cdots x_d^{a_d-1})^{p^t} \cdot x_1^{p^t-a_1} \cdots x_d^{p^t-a_d},
\]
which makes sense because $p^t\gs a_i$ for all $i$. This shows that $I_t(f^{p^t-1}) =  (x_1^{a_1-1}\cdots x_d^{a_d-1})$, and because $j_1 \gs 2$ we have
\[
I_t(f^{p^t-p})= (x_1^{a_1-j_1}\cdots x_d^{a_d-j_d}) \supseteq (x_1^{a_1-2}\cdots x_d^{a_d-1}) \supsetneq (x_1^{a_1-1}\cdots x_d^{a_d-1}) = I_t(f^{p^t-1}).\qedhere
\]
\end{proof}
\begin{theorem} \label{monomial} Let $f=x_1^{a_1}\cdots x_d^{a_d}$ be a monomial in $k[x_1,\ldots,x_d]$, with $a_i >0$ for all $i=1,\ldots,d$. Let $a:= \norm{f} = \max \{a_i \mid 1 \ls i \ls d\}$. Then $f$ has level $e := \left\lceil \log_p(a)  \right\rceil+1$, and $I_e(f^{p^e-1}) = (x_1^{a_1-1}\cdots x_d^{a_d-1})$. Furthermore,
\[
\ds \delta:=  \prod_{i=1}^d \left(x_i^{p^e-pa_i} \cdot D_{x_i,p^e-1} \cdot x_i^{a_i-1} \right) \in \DD^{(e)}
\]
is a differential operator associated with $f$.
\end{theorem}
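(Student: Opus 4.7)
The plan is to combine the lower bound from Lemma \ref{lower bound of the level} with an explicit computation of the stabilizing $p^e$-th root ideal, and then to verify the explicit differential operator directly via a variable-by-variable reduction. Throughout, I would abbreviate $e := \lceil \log_p(a)\rceil + 1$, so that $p^e \geq p a \geq p a_i$ for every $i$.

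For the level and $I_e(f^{p^e-1})$, I would write
\[
f^{p^e-1} = \bigl(x_1^{a_1-1}\cdots x_d^{a_d-1}\bigr)^{p^e}\cdot x_1^{p^e-a_1}\cdots x_d^{p^e-a_d},
\]
which is legitimate because $1 \leq p^e - a_i \leq p^e-1$ for all $i$. By \cite[Proposition 2.5]{BMS} (the freeness description recalled in the first remark of Section 2), this identifies $I_e(f^{p^e-1}) = (x_1^{a_1-1}\cdots x_d^{a_d-1})$. Similarly, since $(p^e-p)a_i = p^e(a_i-1)+(p^e - pa_i)$ with $0 \leq p^e - pa_i \leq p^e-1$, one finds
\[
f^{p^e-p} = \bigl(x_1^{a_1-1}\cdots x_d^{a_d-1}\bigr)^{p^e}\cdot x_1^{p^e-pa_1}\cdots x_d^{p^e-pa_d},
\]
so $I_e(f^{p^e-p}) = (x_1^{a_1-1}\cdots x_d^{a_d-1}) = I_e(f^{p^e-1})$. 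By Theorem \ref{first step of the algorithm}(ii), the level of $f$ is at most $e$, and Lemma \ref{lower bound of the level} gives the matching lower bound.

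For the explicit differential operator, I would first note that $\delta \in \DD^{(e)}$ since each factor $x_i^{p^e-pa_i}\cdot D_{x_i,p^e-1}\cdot x_i^{a_i-1}$ lies in $\DD^{(e)} = \End_{R^{p^e}}(R)$, which is closed under composition. Because factors corresponding to different variables commute (each $D_{x_i,t}$ is $R[x_j : j\ne i]$-linear), and because $f^{p^e-1}$ factors as $\prod_i x_i^{(p^e-1)a_i}$, it suffices to check for each $i$ the one-variable identity
\[
\bigl(x_i^{p^e-pa_i}\, D_{x_i,p^e-1}\, x_i^{a_i-1}\bigr)\bigl(x_i^{(p^e-1)a_i}\bigr) = x_i^{(p^e-p)a_i}.
\]
Carrying out the three steps (multiply by $x_i^{a_i-1}$, apply $D_{x_i,p^e-1}$, multiply by $x_i^{p^e-pa_i}$) produces the correct exponent, and the identity reduces to the congruence $\binom{p^ea_i-1}{p^e-1}\equiv 1\pmod{p}$.

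The main (and essentially only) obstacle is this binomial congruence, which I expect to be the heart of the proof. I would resolve it with Lucas's theorem as follows: since $p^ea_i - 1 = (a_i-1)p^e + (p^e-1)$, its base-$p$ digits in positions $0,\ldots,e-1$ are all $p-1$, while its digits in positions $\geq e$ are exactly those of $a_i - 1$. The base-$p$ digits of $p^e-1$ are $p-1$ in positions $0,\ldots,e-1$ and $0$ elsewhere. Thus
\[
\binom{p^ea_i-1}{p^e-1} \equiv \prod_{j=0}^{e-1}\binom{p-1}{p-1}\cdot\prod_{j\geq e}\binom{(a_i-1)_j}{0} \equiv 1 \pmod{p},
\]
which completes the verification that $\delta(f^{p^e-1}) = f^{p^e-p}$, and hence $\delta(1/f) = 1/f^p$.
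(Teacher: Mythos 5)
Your proof is correct, and the overall architecture matches the paper's: use Lemma \ref{lower bound of the level} for the lower bound on the level, compute the stabilizing root ideal explicitly, and verify the explicit operator. The one substantive difference is in the verification that $\delta(f^{p^e-1}) = f^{p^e-p}$. You compute variable-by-variable directly on $x_i^{(p^e-1)a_i}$ and are led to the congruence $\binom{p^e a_i - 1}{p^e-1} \equiv 1 \pmod p$, which you resolve via Lucas's theorem. The paper instead applies $R^{p^e}$-linearity \emph{before} the differentiation: writing
\[
f^{p^e-1} = \bigl(x_1^{a_1-1}\cdots x_d^{a_d-1}\bigr)^{p^e}\cdot x_1^{p^e-a_1}\cdots x_d^{p^e-a_d}
\]
and using that $\delta \in \End_{R^{p^e}}(R)$, the $p^e$-th power factor pulls out and the remaining calculation collapses to $D_{x_i,p^e-1}(x_i^{p^e-1}) = 1$ for each $i$, which is immediate from the definition and requires no congruence arithmetic. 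Both are valid, but the paper's route is a bit slicker; in fact, the same observation $x_i^{p^e a_i-1} = (x_i^{a_i-1})^{p^e}\,x_i^{p^e-1}$ together with $R^{p^e}$-linearity of $D_{x_i,p^e-1}$ gives your binomial congruence for free, so the Lucas step is avoidable. One further small point: you establish the upper bound on the level by exhibiting the equality $I_e(f^{p^e-p}) = I_e(f^{p^e-1})$ and invoking Theorem \ref{first step of the algorithm}(ii), whereas the paper gets the same bound from the mere existence of $\delta\in\DD^{(e)}$ with $\delta(f^{p^e-1}) = f^{p^e-p}$; these are logically equivalent.
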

\begin{proof} Set $e := \left\lceil \log_p(a)  \right\rceil+1$. During the proof of Lemma \ref{lower bound of the level} we already proved that $I_e(f^{p^e-1}) = (x_1^{a_1-1}\cdots x_d^{a_d-1})$; keeping this fact in mind, it is enough to check that $\delta(f^{p^e-1}) = f^{p^e-p}$. Indeed, we have
\begin{eqnarray*}
\begin{array}{c}
\ds \delta(f^{p^e-1}) = \delta\left(x_1^{p^ea_1-a_1} \cdots x_d^{p^ea_d-a_d}\right) = \left(x_1^{a_1-1} \cdots x_d^{a_d-1}\right)^{p^e} \delta\left(x_1^{p^e-a_1} \cdots x_d^{p^e-a_d}\right) = \\ \\
\ds = \prod_{i=1}^d \left(x_i^{p^ea_i-pa_i}\cdot D_{x_i,p^e-1}(x_i^{p^e-1})\right) = x_1^{p^ea_1-pa_1} \cdots x_d^{p^ea_d-pa_d} = f^{p^e-p},
\end{array}
\end{eqnarray*}
and therefore the proof is completed.
\end{proof}

Regarding the level $e$ obtained in Theorem \ref{monomial}, one might ask whether, given any non-zero $f\in R$, its level would always be bounded above by $\lceil\log_p (\norm{f})\rceil +1$. Unfortunately, this is not the case, as the following example illustrates.

\begin{example}
Consider $f:=xy^3+x^3\in\FF_2[x,y]$. In this case, one can check with Macaulay2 that the level of $f$ is $4$, while $\lceil\log_2 (\norm{f})\rceil +1=3$. In fact, the level is even strictly greater than $\lceil\log_2(\deg(f))\rceil + 1=3$.
\end{example}

The monomial in Theorem \ref{monomial} is assumed to be of the form $x_1^{a_1}\cdots x_d^{a_d}$. Using a suitable linear change of coordinates, we immediately get the following Corollary, which includes the general monomial case.
\begin{corollary} \label{linforms} Let $n\ls d$, let $f=\ell_1^{a_1}\cdots \ell_n^{a_n}$ be a product of powers of  linear forms which are linearly independent over $k$, and let $a:=\max \{a_i \mid 1\ls i \ls n\}$. Then $f$ has level $e = \left\lceil \log_p(a)  \right\rceil+1$, the ideal of $p^e$-th roots is $I_e(f^{p^e-1}) = (\ell_1^{a_1-1}\cdots \ell_n^{a_n-1})$ and
\[
\ds \delta:=  \prod_{i=1}^n \left(\ell_i^{p^e-pa_i} \cdot D_{\ell_i,p^e-1} \cdot \ell_i^{a_i-1} \right) \in \DD^{(e)}
\]
is a differential operator associated with $f$. Here, if $\ell_i = \sum_{j=1}^d \lambda_{ij} x_j$, then the differential operator $D_{\ell_i, p^e-1}$ is defined as $\sum_{j=1}^d \lambda_{ij} D_{x_j,p^e-1}$.
\end{corollary}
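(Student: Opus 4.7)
The plan is to reduce to Theorem~\ref{monomial} via a linear change of coordinates. Since $\ell_1,\ldots,\ell_n$ are $k$-linearly independent, I can extend them to a $k$-basis $\ell_1,\ldots,\ell_d$ of the space of linear forms in $R$; the assignment $x_i\mapsto \ell_i$ then extends to a $k$-algebra automorphism $\phi\colon R\to R$, and $\phi^{-1}(f) = x_1^{a_1}\cdots x_n^{a_n} =: g$ is a monomial of the shape considered in Theorem~\ref{monomial}.

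The crucial point I would emphasize is that the level, the ideal of $p^e$-th roots, and the existence of an associated differential operator are all invariants of a polynomial inside the $k$-algebra $R$, transforming equivariantly under any $k$-algebra automorphism. Indeed, $\phi$ commutes with the Frobenius endomorphism, so $\phi(J^{[p^e]}) = \phi(J)^{[p^e]}$ for every ideal $J\subseteq R$; this gives $I_e\bigl(\phi(h)^{p^e-1}\bigr) = \phi\bigl(I_e(h^{p^e-1})\bigr)$ for every $h\in R$ and every $e$, so the chain \eqref{cadena incordio} for $f$ is the $\phi$-image of the chain for $g$, and both polynomials share the same level. Furthermore, $\phi$ maps $R^{p^e}$ isomorphically onto itself, so conjugation by $\phi$ stabilizes $\DD^{(e)} = \End_{R^{p^e}}(R)$; hence if $\tilde\delta\in\DD^{(e)}$ is associated with $g$, then $\phi\circ\tilde\delta\circ\phi^{-1}\in\DD^{(e)}$ is associated with $\phi(g)=f$.

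I would then invoke Theorem~\ref{monomial} for the monomial $g\in k[x_1,\ldots,x_n]$ (extending scalars to $R$ changes neither the level, nor the generator of $I_e(g^{p^e-1})$, nor the formula for $\tilde\delta$, since the extra variables $x_{n+1},\ldots,x_d$ do not occur in $g$). This yields $e = \lceil\log_p(a)\rceil+1$, the principal ideal $I_e(g^{p^e-1}) = (x_1^{a_1-1}\cdots x_n^{a_n-1})$, and the explicit operator $\tilde\delta = \prod_{i=1}^n \bigl(x_i^{p^e-pa_i}\cdot D_{x_i,p^e-1}\cdot x_i^{a_i-1}\bigr)$. Transporting everything through $\phi$ gives the level, ideal, and differential operator claimed in the statement, the latter under the identification $D_{\ell_i,p^e-1} := \phi\circ D_{x_i,p^e-1}\circ\phi^{-1}$.

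The main thing that still needs to be checked is that this intrinsic operator $\phi\circ D_{x_i,p^e-1}\circ\phi^{-1}$ matches the expression $\sum_{j=1}^d \lambda_{ij} D_{x_j,p^e-1}$ written in the statement, and that the resulting $\delta$ does not depend on the auxiliary completion of $\ell_1,\ldots,\ell_n$ to a basis. Both points reduce to an explicit computation of how the divided-power derivations $D_{x,t}$ transform under linear changes of coordinates, which is the only delicate ingredient in the argument.
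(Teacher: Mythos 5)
Your reduction to Theorem~\ref{monomial} by conjugation with the $k$-algebra automorphism $\phi$ is exactly the route the paper intends (its proof is a one-line appeal to a linear change of coordinates), and the bookkeeping you carry out is correct: $\phi$ commutes with Frobenius, so $I_e\bigl(\phi(h)^{p^e-1}\bigr)=\phi\bigl(I_e(h^{p^e-1})\bigr)$; $\phi$ stabilizes $R^{p^e}$ because $k$ is perfect, so conjugation preserves $\DD^{(e)}=\End_{R^{p^e}}(R)$; and the passage from $k[x_1,\ldots,x_n]$ to $R$ is harmless. This cleanly establishes the claimed level $e$, the claimed ideal $I_e(f^{p^e-1})$, and the existence of \emph{some} associated $\delta=\phi\circ\tilde\delta\circ\phi^{-1}\in\DD^{(e)}$.

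The step you defer, however, is a genuine gap, and both facts you hoped would fall out of ``an explicit computation'' are false. The conjugate $\phi\circ D_{x_i,t}\circ\phi^{-1}$ is \emph{not} $\sum_j\lambda_{ij}D_{x_j,t}$. Already for $t=1$, if $\Lambda=(\lambda_{ij})_{1\ls i,j\ls d}$ records your completion $\phi(x_i)=\ell_i=\sum_j\lambda_{ij}x_j$, the chain rule gives $\phi\circ\partial_{x_i}\circ\phi^{-1}=\sum_j(\Lambda^{-1})_{ji}\,\partial_{x_j}$, so the inverse transpose of $\Lambda$ appears rather than $\Lambda$ itself; for $t\gs 2$ the conjugate is not even a $k$-linear combination of $D_{x_1,t},\ldots,D_{x_d,t}$ alone, and it visibly depends on how you extend $\ell_1,\ldots,\ell_n$ to a basis. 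This matters concretely: take $p=2$, $n=1$, $f=\ell_1=x+y$, so $a=1$ and $e=1$. The operator displayed in the Corollary is $D_{\ell_1,1}=D_{x,1}+D_{y,1}$, and $(D_{x,1}+D_{y,1})(f^{p-1})=(D_{x,1}+D_{y,1})(x+y)=1+1=0\ne 1=f^{p-p}$; whereas your conjugation (say with $\phi(x)=x+y$, $\phi(y)=y$) produces $\partial_x$, which does send $x+y$ to $1$, and a different completion $\phi(y)=x$ would produce $\partial_y$ instead. So your argument proves the level, the ideal, and the existence of a valid $\delta\in\DD^{(e)}$, but the operator it actually yields is not the one written in the Corollary, and the one written in the Corollary does not in general satisfy $\delta(1/f)=1/f^p$. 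To close your proof you must either compute $\phi\circ\tilde\delta\circ\phi^{-1}$ honestly (accepting that it is more complicated than the displayed formula and depends on the chosen completion), or give an independent verification, in the spirit of the proof of Theorem~\ref{monomial}, that a correctly normalized version of the displayed operator does the job.
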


\section{Families of level one}\label{section of level one}
Polynomials of level one, that is polynomials $f$ such that $I_1(f^{p-1}) = R$, are somehow special. For instance, let $f,g \in R$ and let $\delta \in \DD$ be associated with $f$. Assume that $e = 1$, then for $\delta':=\delta\left( g^{p-1} \cdot \underline{ \ \ }\right)$ we get
\[
\ds \delta'\left(\frac{g}{f}\right) = \delta\left(\frac{g^p}{f}\right) = g^p \cdot \delta \left(\frac{1}{f}\right) = \left(\frac{g}{f}\right)^p.
\]
The authors do not know whether, for any choice of $f,g \in R$, $f \ne 0$, there always exists $\delta' \in \DD$ such that $\delta'(g/f) = (g/f)^p$. In fact, when $\delta \in \DD^{(e)}$ for $e \gs 2$, the best we can get is $\delta'\left(g/f\right) = g^{p^e}/f^{p}$, with $\delta':= \delta\left(g^{p^e-1} \cdot \underline{ \ \ } \right)$. On the other hand, for any $f \in R$ we have $R_f \cong \DD \cdot \frac{1}{f}$ and therefore, for any $g \in R$, there exists $\delta' \in \DD$ such that $\delta'\left(1/f\right) = g^p/f^p$. In fact it is enough to set $\delta':= g^{p} \cdot \delta$.





We will now exhibit some families of polynomials that have level one, together with some examples. However, before doing so, we want to single out the following elementary statement, because we will be using it repeatedly throughout this section. It may be regarded as a straightforward sufficient condition which ensures that a polynomial has level one. In this section, unless otherwise stated, $k$ will denote a perfect field and $R=k[x_1,\ldots,x_d]$ will be a polynomial ring over $k$.

\begin{lemma}\label{sqfpowers_nonhomog}
Let $f \in R$ be a non-zero polynomial, write
\[
f^{p^e-1}=\sum_{0\ls\norm{\mathbf{\alpha}}\ls p^e-1}f_{\mathbf{\alpha}}^{p^e} \ \mathbf{x}^{\mathbf{\alpha}},
\]
and assume that $f_{\mathbf{\beta}}$ is a unit for some $0\ls\norm{\mathbf{\beta}}\ls p^e-1$ and some $e\gs 1$. Then, $f$ has level one.
\end{lemma}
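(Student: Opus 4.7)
The plan is to combine the concrete description of $I_e(f^{p^e-1})$ via its coefficients in the standard $R^{p^e}$-basis expansion with the monotonicity of the chain \eqref{cadena incordio}. These two ingredients are already available in the excerpt, so the argument should be short.

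First I would invoke the Remark following the definition of the ideal of $p^e$-th roots: when $f^{p^e-1}$ is written as $\sum_{0\ls\norm{\mathbf{\alpha}}\ls p^{e}-1} f_{\mathbf{\alpha}}^{p^e}\mathbf{x}^{\mathbf{\alpha}}$, the ideal $I_e(f^{p^e-1})$ is generated by the elements $f_{\mathbf{\alpha}}$. By hypothesis, one of these generators, namely $f_{\mathbf{\beta}}$, is a unit of $R$, so $I_e(f^{p^e-1}) = R$.

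Next I would invoke Proposition \ref{propiedades del ideal raiz}(ii), which gives the decreasing chain
\[
R=I_0(f^{p^0-1})\supseteq I_1(f^{p-1})\supseteq I_2(f^{p^2-1})\supseteq \ldots \supseteq I_e(f^{p^e-1}).
\]
Combined with the previous step, all inclusions above must be equalities; in particular $I_1(f^{p-1}) = R$. Since the chain is already stationary at the first step, the definition of the level (Theorem \ref{first step of the algorithm}) forces $e_{\mathrm{lev}} = 1$, so $f$ has level one.

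The argument is essentially bookkeeping from the preceding results, so I do not anticipate any real obstacle; the only point to be careful about is making sure the case $\mathbf{\beta} = \mathbf{0}$ (i.e.\ $f_{\mathbf{0}}$ being the coefficient of the constant monomial) is covered by the Remark's description, which it is, since that Remark allows $0 \ls \norm{\mathbf{\alpha}} \ls p^e-1$.
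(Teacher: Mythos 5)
Your argument is correct and follows essentially the same route as the paper's: both observe that the unit coefficient $f_{\mathbf{\beta}}$ forces $I_e(f^{p^e-1}) = R$, then use the decreasing chain $I_1(f^{p-1}) \supseteq I_e(f^{p^e-1})$ from Proposition \ref{propiedades del ideal raiz}(ii) to conclude $I_1(f^{p-1}) = R$, which is exactly the condition for level one. The only cosmetic difference is that you cite the Remark giving the explicit generating set for $I_e$, whereas the paper appeals directly to the definition; the substance is identical.
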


\begin{proof}
By definition, we have that $I_e(f^{p^e-1}) = R$; on the other hand, we know that $I_1 (f^{p-1})\supseteq I_e(f^{p^e-1})$. In this way, combining these two facts it follows that $I_1(f^{p-1}) = R$, and therefore $f$ has level one.
\end{proof}
We can give an easy but useful characterization of homogeneous polynomials of level one.
\begin{lemma} \label{sqfpowers} Let $f \in R$ be a homogeneous non-zero polynomial. Let $\{\mu_j\}_{j=1}^{p^d}:= \{\mathbf{x}^{\mathbf{\alpha}} \mid \norm{\alpha} \ls p-1\}$ be the monomial basis of $R$ as a $R^p$-module. Then $f$ has level one if and only if $\mu_j \in \supp(f^{p-1})$ for some $j = 1,\ldots,p^d$.
\end{lemma}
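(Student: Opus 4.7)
The plan is to translate the level-one condition $I_1(f^{p-1})=R$ into a condition on the $R^p$-module decomposition of $f^{p-1}$, and then use homogeneity of $f$ to connect that decomposition to the ordinary monomial support $\supp(f^{p-1})$.

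First, I would expand $f^{p-1}=\sum_{0\ls\norm{\alpha}\ls p-1} f_\alpha^{p}\,\mathbf{x}^{\alpha}$ along the $R^p$-basis $\{\mu_j\}$, so that by the remark recalled after the definition of $I_e$ we have $I_1(f^{p-1})=(f_\alpha\mid 0\ls\norm{\alpha}\ls p-1)$. Hence $f$ has level one if and only if this ideal is the unit ideal. Since $f$ (hence $f^{p-1}$) is homogeneous, Remark \ref{I_e homog} tells me that every $f_\alpha$ is itself homogeneous, and in particular $I_1(f^{p-1})$ is a homogeneous ideal of $R$. A homogeneous ideal of a polynomial ring equals $R$ if and only if it contains a nonzero element of degree zero, so the problem reduces to showing that \emph{some} $f_\alpha$ is a nonzero scalar if and only if some $\mu_j$ lies in $\supp(f^{p-1})$.

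Next, I would link the constant term of $f_\alpha$ to the coefficient of $\mathbf{x}^\alpha$ in the ordinary expansion of $f^{p-1}$. Writing $f^{p-1}=\sum_\beta c_\beta \mathbf{x}^{\beta}$ in the standard monomial basis and using the unique division $\beta=p\gamma+\alpha$ with $\norm{\alpha}\ls p-1$, one obtains $f_\alpha^{p}=\sum_\gamma c_{p\gamma+\alpha}\,\mathbf{x}^{p\gamma}$, whose constant term is exactly $c_\alpha$. In particular, the coefficient of $\mu_j=\mathbf{x}^\alpha$ in $f^{p-1}$ is the $p$-th power of the constant term of $f_\alpha$ (no cancellation can occur, by uniqueness of the division). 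Combined with homogeneity, this gives: $f_\alpha$ is a nonzero constant if and only if $\mu_j=\mathbf{x}^\alpha\in\supp(f^{p-1})$.

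Putting these together yields both directions simultaneously: if some $\mu_j\in\supp(f^{p-1})$, then the corresponding $f_\alpha$ is a nonzero constant, so a unit, and Lemma \ref{sqfpowers_nonhomog} concludes that $f$ has level one; conversely, if $f$ has level one then the homogeneous ideal $I_1(f^{p-1})=R$ must contain a unit among its (homogeneous) generators $f_\alpha$, forcing $f_\alpha$ to be a nonzero scalar and hence $\mu_j=\mathbf{x}^\alpha\in\supp(f^{p-1})$. The only thing that requires any care is to distinguish cleanly between the two meanings of ``support'' — the $R^p$-basis decomposition versus the ordinary monomial expansion — and to use homogeneity as the bridge that turns ``nonzero constant term'' into ``equal to a nonzero constant.'' No substantive calculational obstacle is expected beyond this bookkeeping.
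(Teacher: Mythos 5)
Your proof is correct and follows essentially the same route as the paper: both identify $I_1(f^{p-1})$ with the ideal generated by the $f_\alpha$'s in the $R^p$-basis expansion, both observe that the coefficient of $\mathbf{x}^\alpha$ (with $\norm{\alpha}\ls p-1$) in $f^{p-1}$ equals the $p$-th power of the constant term of $f_\alpha$ (no cancellation, by uniqueness of the base-$p$ decomposition), and both use homogeneity via Remark~\ref{I_e homog} to reduce ``constant term nonzero'' to ``equal to a nonzero scalar.'' The only cosmetic difference is that you emphasize that each generator $f_\alpha$ is itself homogeneous, whereas the paper works directly with the homogeneity of the ideal $I_1(f^{p-1})$ when extracting the degree-zero piece; the content is the same.
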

\begin{proof} Write 
\[
f^{p-1}=\sum_{0\ls\norm{\mathbf{\alpha}}\ls p-1}f_{\mathbf{\alpha}}^p \  \mathbf{x}^{\mathbf{\alpha}}.
\]
Note that, since $f$ is homogeneous, $I_1(f^{p-1})$ is a homogeneous ideal by Remark \ref{I_e homog}. If $f$ has level one, then $I_1(f^{p-1}) = (f_\alpha  \mid 0 \ls \norm{\alpha} \ls p-1) = R$, therefore there exists at least one coefficient $f_\beta$ that is outside of the irrelevant maximal ideal $\m=(x_1,\ldots, x_d)$. Write $f_\beta = \lambda + r$, with $\lambda \in k$ and $r \in \m$. Then, we can write  $f^{p-1} = \lambda^p \mathbf{x}^{\mathbf{\beta}} + h$ for some $h \in R$. Also, since $\{\mathbf{x}^{\mathbf{\alpha}}  \mid 0 \ls \norm{\alpha} \ls p-1\}$ is a basis of $R$ as $R^p$-module, there is no cancellation between $\lambda^p \mathbf{x}^{\mathbf{\beta}}$ and $h$. Thus, $\mu_j=\mathbf{x}^{\mathbf{\beta}} \in \supp(f^{p-1})$. Conversely, if $\mu_j = \mathbf{x}^{\mathbf{\beta}} \in \supp(f^{p-1})$ for some $0 \ls \norm{\beta} \ls p-1$, then we can write again $f^{p-1} = \lambda^p \mathbf{x}^{\mathbf{\beta}} + h$ for some $\lambda \in k$ and some $h \in R$. Also, we can assume that $\mathbf{x}^{\mathbf{\beta}} \notin \supp(h)$. Then the coefficient $f_\beta^p$ of $\mathbf{x}^{\mathbf{\beta}}$ in the expansion of $f^{p-1}$ must be $\lambda^p+r^p = (\lambda+r)^p$ for some $r \in \m$, and thus $\lambda+r \in I_1(f^{p-1})$. Since the latter is homogeneous (here we are using again Remark \ref{I_e homog}), we have in particular that $\lambda \in I_1(f^{p-1})$, which implies $I_1(f^{p-1}) = R$, and therefore $f$ has level one.
\end{proof}


\begin{proposition} \label{sqfree} Let $f \in R$ be a non-zero polynomial whose support contains a squarefree term involving a variable that does not appear in any other term of the support of $f$. Then $f$ has level one.
\end{proposition}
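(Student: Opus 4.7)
The plan is to exhibit a monomial $\mathbf{x}^\beta$ with $\norm{\beta}\ls p-1$ whose coefficient in the $R^p$-basis expansion $f^{p-1} = \sum_\alpha f_\alpha^p \mathbf{x}^\alpha$ is a unit of $R$, so that Lemma \ref{sqfpowers_nonhomog} applies with $e = 1$.

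First, I would use the hypothesis to decompose $f = \lambda x_s m + g$, where $\lambda \in k \smallsetminus \{0\}$ is the coefficient of the distinguished squarefree term, $x_s$ is the variable guaranteed by the hypothesis (appearing only in this term), $m = x_{i_1}\cdots x_{i_r}$ is a squarefree monomial in the remaining variables (possibly $m = 1$), and $g \in R$ is a polynomial in which $x_s$ does not occur.

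Next, I would apply the binomial theorem to obtain
\[
f^{p-1} = \sum_{k=0}^{p-1} \binom{p-1}{k} \lambda^k x_s^k m^k g^{p-1-k}.
\]
The crucial observation is that the $x_s$-degree of the $k$-th summand equals $k$, because $g$ does not involve $x_s$; hence the only summand with $x_s$-degree equal to $p-1$ is the one for $k = p-1$, which simplifies to $\lambda^{p-1} x_s^{p-1} m^{p-1} = \lambda^{p-1}\mathbf{x}^{\beta}$, where $\mathbf{x}^\beta := x_s^{p-1} x_{i_1}^{p-1}\cdots x_{i_r}^{p-1}$. Since $\norm{\beta}=p-1$, this $\mathbf{x}^\beta$ is one of the basis elements of $R$ as an $R^p$-module, and from what precedes, the coefficient of $\mathbf{x}^\beta$ in $f^{p-1}$ is exactly $\lambda^{p-1} \in k \smallsetminus \{0\}$. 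Extracting a $p$-th root using perfectness of $k$, the scalar $f_\beta$ is a nonzero element of $k$, hence a unit of $R$, so Lemma \ref{sqfpowers_nonhomog} concludes the argument.

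The only point requiring care is to confirm that no other summand in the binomial expansion contributes to the coefficient of $\mathbf{x}^\beta$; this is immediate from the previous observation, since every summand with $k \ne p-1$ has $x_s$-degree at most $p-2$. I do not expect any substantial obstacle: the squarefreeness of the distinguished term, combined with the exclusivity of $x_s$, essentially forces the choice of $\mathbf{x}^\beta$, and the whole argument reduces to reading off a single coefficient in a binomial expansion.
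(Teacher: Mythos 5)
Your proof is correct and follows essentially the same route as the paper's: you isolate the distinguished squarefree term, expand $f^{p-1}$ (you use a binomial grouping $f = \lambda x_s m + g$ where the paper writes out the full multinomial sum, but the substance is identical), observe that only the choice that picks the distinguished term all $p-1$ times can produce $x_s$-degree $p-1$, and read off the unit coefficient $\lambda^{p-1}$ of $(x_s m)^{p-1}$ to invoke Lemma \ref{sqfpowers_nonhomog}.
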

\begin{proof} Without loss of generality we can assume that $x_{1}\cdots x_{n} \in \supp(f)$, and that $x_1$ does not appear in any other term of $\supp(f)$. Write $f=\lambda_0 x_1 \cdots x_n+ \sum_{i=1}^s \lambda_i m_i$, where $m_i=x^{\alpha_i}$ and $\alpha_i \in \NN^d$ are of the form $(0,\alpha_{i2},\ldots,\alpha_{id})$ for $i=1,\ldots,s$. Then,
\[
f^{p-1} = \sum_{i_0+i_1+\cdots+i_s = p-1} {p-1 \choose i_0, \ldots, i_s} \lambda_0^{i_0}\lambda_1^{i_1} \cdots \lambda_s^{i_s} (x_1\cdots x_n)^{i_0} m_1^{i_1}\cdots m_s^{i_s}.
\]
Notice that in order for a term in the support of $f^{p-1}$ to be divisible by $x_1^{p-1}$ it is necessary that $i_0 = p-1$, in which case $i_1 = \ldots = i_s = 0$. Hence $x_1^{p-1} \cdots x_n^{p-1}$ is in the support of $f^{p-1}$, appearing with coefficient $\lambda_0^{p-1} \ne 0$. Then we can write
\[
\ds f^{p-1} = \lambda_0^{p-1} x_1^{p-1} \cdots x_n^{p-1} + \sum_{{\tiny \begin{array}{c} 0 \ls \norm{\alpha} \ls p-1 \\ \alpha_1\ne p-1\end{array}}} f_\alpha^p  \ \mathbf{x}^\alpha,
\]
and the Proposition now follows from Lemma \ref{sqfpowers_nonhomog}, since $\lambda_0^{p-1}$ is a unit.
\end{proof}
\begin{example} Let $f=x^2+y^2+xyz \in \FF_p[x,y,z]$. Since $z$ appears in the square free term $xyz$ and nowhere else in the support of $f$ we have that $f$ has level one by Proposition \ref{sqfree}. In fact, $\delta:= D_{x,p-1}D_{y,p-1}D_{z,p-1} \in \DD^{(1)}$ is a differential operator associated with $f$.
\end{example}

\begin{proposition} \label{allsqfree} Let $f \in R$ be a non-zero polynomial of degree $n$ such that every element of its support is a squarefree monomial. Then $f$ has level one.
\end{proposition}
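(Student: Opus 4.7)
The plan is to reduce the statement to Lemma \ref{sqfpowers_nonhomog} by showing that every monomial appearing in the expansion of $f^{p-1}$ is automatically one of the $R^{p}$-basis monomials $\{\mathbf{x}^{\mathbf{\alpha}} \mid \norm{\alpha}\ls p-1\}$. Once this is established, the mere nonvanishing of $f^{p-1}$ (which is free since $R$ is a domain and $f\ne 0$) forces some coefficient $f_{\mathbf{\beta}}$ in the expansion $f^{p-1}=\sum f_{\mathbf{\alpha}}^{p}\mathbf{x}^{\mathbf{\alpha}}$ to be a nonzero constant, hence a unit in $R$, and Lemma \ref{sqfpowers_nonhomog} finishes the proof.

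More precisely, I would write $f=\sum_{i=1}^{s}\lambda_{i}\mathbf{x}^{\mathbf{\alpha}_{i}}$ with every $\mathbf{\alpha}_{i}\in\{0,1\}^{d}$ by the squarefree hypothesis, and expand via the multinomial theorem:
\[
f^{p-1}=\sum_{i_{1}+\cdots+i_{s}=p-1}\binom{p-1}{i_{1},\ldots,i_{s}}\lambda_{1}^{i_{1}}\cdots\lambda_{s}^{i_{s}}\,\mathbf{x}^{\sum_{j}i_{j}\mathbf{\alpha}_{j}}.
\]
For any such exponent vector and any coordinate $k\in\{1,\ldots,d\}$, one has $\bigl(\sum_{j}i_{j}\mathbf{\alpha}_{j}\bigr)_{k}=\sum_{j}i_{j}(\mathbf{\alpha}_{j})_{k}\ls\sum_{j}i_{j}=p-1$, because each $(\mathbf{\alpha}_{j})_{k}\in\{0,1\}$. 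Hence every monomial appearing in $f^{p-1}$ (before or after collecting) has norm at most $p-1$.

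Collecting terms gives $f^{p-1}=\sum_{0\ls\norm{\mathbf{\alpha}}\ls p-1}c_{\mathbf{\alpha}}\,\mathbf{x}^{\mathbf{\alpha}}$ with each $c_{\mathbf{\alpha}}\in k$. Because $k$ is perfect we can write $c_{\mathbf{\alpha}}=f_{\mathbf{\alpha}}^{p}$ with $f_{\mathbf{\alpha}}\in k$, matching the decomposition used in the definition of $I_{1}(f^{p-1})$. Since $R$ is an integral domain and $f\neq 0$, we have $f^{p-1}\neq 0$, so at least one $c_{\mathbf{\beta}}\neq 0$, and then $f_{\mathbf{\beta}}\in k^{\times}\subseteq R^{\times}$ is a unit. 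Applying Lemma \ref{sqfpowers_nonhomog} with $e=1$ yields that $f$ has level one.

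There is no substantial obstacle: the only small thing to notice is that the squarefree hypothesis turns a purely combinatorial constraint ($\sum i_{j}=p-1$) into the norm bound $p-1$ coordinatewise, which is precisely what matches the $R^{p}$-module basis. The nonvanishing of $f^{p-1}$ coming from $R$ being a domain then does all the remaining work.
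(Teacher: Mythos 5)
Your proof is correct, and it takes a genuinely different route from the paper's. The paper singles out a top-degree squarefree monomial $x_1\cdots x_n\in\supp(f)$ and runs a degree-counting argument to show that the basis monomial $(x_1\cdots x_n)^{p-1}$ receives exactly one contribution in the multinomial expansion, hence appears with the nonzero coefficient $\lambda_0^{p-1}$; it must rule out cancellations among terms of the expansion. Your argument avoids any cancellation analysis: you observe that because each $\mathbf{\alpha}_j\in\{0,1\}^d$, every exponent vector $\sum_j i_j\mathbf{\alpha}_j$ with $\sum_j i_j=p-1$ automatically has all coordinates $\ls p-1$, so $f^{p-1}$ already lies in the $k$-span of the $R^p$-basis monomials. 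By uniqueness of that decomposition (and $k=k^p\subseteq R^p$), the canonical coefficients $f_{\mathbf{\alpha}}$ are all in $k$, so $I_1(f^{p-1})$ is generated by field elements; since $R$ is a domain and $f\neq 0$, the polynomial $f^{p-1}$ is nonzero, forcing some $f_{\mathbf{\beta}}\in k^\times$ and hence $I_1(f^{p-1})=R$. This is cleaner and a bit stronger in spirit: it shows $I_1(f^{p-1})\in\{(0),R\}$ for any $f$ with squarefree support, without ever committing to a particular monomial. Both approaches then invoke Lemma \ref{sqfpowers_nonhomog} with $e=1$. One minor point worth flagging explicitly, which you do handle implicitly: the hypothesis that $k$ is perfect is needed to write $c_{\mathbf{\alpha}}=f_{\mathbf{\alpha}}^p$ with $f_{\mathbf{\alpha}}\in k$, so that the $c_{\mathbf{\alpha}}$ indeed lie in $R^p$ and your expression is the basis decomposition.
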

\begin{proof} Without loss of generality we can assume that $x_{1}\cdots x_n \in \supp(f)$. We want to show that we can apply Lemma \ref{sqfpowers_nonhomog}. Let $f= \lambda_0 x_1\cdots x_n+ \sum_{i=1}^s \lambda_i m_i$, where the monomials $m_i$ are squarefree of degrees $d_i:= \deg(m_i) \ls n$, that we can assume different from $x_1\cdots x_n$. Then
\[
f^{p-1} = \sum_{i_0+i_1+\cdots+i_s = p-1} {p-1 \choose i_0, \ldots, i_s} \lambda_0^{i_0}\lambda_1^{i_1} \cdots \lambda_s^{i_s} (x_1\cdots x_n)^{i_0} m_1^{i_1}\cdots m_s^{i_s}.
\]
Note that the choice $i_0=p-1$, $i_1=\ldots=i_s=0$ gives the monomial $\lambda_0^{p-1} (x_1\cdots x_n)^{p-1}$, and we want to show that this choice of indices is the only one that gives such a monomial. By way of contradiction, assume that $(x_1\cdots x_n)^{i_0} m_1^{i_1}\cdots m_s^{i_s} = (x_1\cdots x_n)^{p-1}$, then necessarily each $m_i$ divides $x_1\cdots x_n$, because they are squarefree. Since we are assuming that none of the monomials $m_i$ is equal to $x_1\cdots x_n$, we must have that $\deg(m_i) < n$. But then
\[
\ds \deg\left((x_1\cdots x_n)^{i_0} m_1^{i_1}\cdots m_s^{i_s}\right) = ni_0+d_1i_1+\cdots+d_si_s < n(i_0+i_1+\cdots+i_s) = n(p-1),
\]
which is a contradiction because $(x_1\cdots x_n)^{i_0} m_1^{i_1}\cdots m_s^{i_s} = (x_1\cdots x_n)^{p-1}$, and the degree of the latter is $n(p-1)$. Therefore if we write
\[
\ds f^{p-1} =  \sum_{0 \ls \norm{\alpha} \ls p-1} f_\alpha^p  \ \mathbf{x}^\alpha,
\]
then the coefficient of $x_1^{p-1}\cdots x_n^{p-1}$ is precisely $\lambda_0^{p-1}$, which is a unit. Using Lemma \ref{sqfpowers_nonhomog}, the Proposition now follows.
\end{proof}

\begin{example} Let $R=\FF_p[X_{ij}]_{1\ls i,j \ls n}$ be a polynomial ring in $n^2$ variables and let $f=\det(X_{ij})$. Because of Proposition \ref{allsqfree} $f$ has level one, since its support consists only of squarefree monomials.
\end{example}

\begin{proposition}\label{quadratic forms} Let $f \in R = k[x_1,\ldots,x_d]$ be a homogeneous quadric. Then $f$ has level one unless $f$ is the square of a linear form, in which case $f$ has level two.
\end{proposition}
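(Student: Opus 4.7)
The plan is to split into two cases by whether $f=\ell^2$ for some linear form $\ell$, and in the second case to split further by characteristic.

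If $f=\ell^2$, Corollary~\ref{linforms} applied with $n=1$ and $a_1=2$ immediately gives that the level equals $\lceil\log_p 2\rceil+1=2$, since $\log_p 2\in(0,1]$ for every prime $p$. This covers the exceptional case in the statement.

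Suppose now that $f$ is not a square of a linear form; I will show that $I_1(f^{p-1})=R$, which by Theorem~\ref{first step of the algorithm} is equivalent to level one. In characteristic $2$ one has $f^{p-1}=f$, and writing $f=\sum_i a_{ii}x_i^2+\sum_{i<j}a_{ij}x_ix_j$ and expanding in the $R^2$-basis of squarefree monomials (using that $k$ is perfect so the $a_{ij}$ admit square roots) yields
\[
f=\Bigl(\sum_i\sqrt{a_{ii}}\,x_i\Bigr)^{\!2}\cdot 1+\sum_{i<j}\bigl(\sqrt{a_{ij}}\bigr)^{\!2}x_ix_j,
\]
so $I_1(f)=\bigl(\sum_i\sqrt{a_{ii}}\,x_i\bigr)+\bigl(\sqrt{a_{ij}}\mid i<j\bigr)$. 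The hypothesis $f\neq\bigl(\sum_i\sqrt{a_{ii}}\,x_i\bigr)^{\!2}$ forces some cross coefficient $a_{ij}$ with $i<j$ to be nonzero, so $I_1(f)$ contains a nonzero scalar and equals $R$.

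In odd characteristic, a linear change of variables (which preserves the level because it commutes with Frobenius and conjugates $\DD$) diagonalizes $f$ to $\sum_{i=1}^r c_ix_i^2$ with all $c_i\neq 0$. The case $r=1$ yields $f=c_1x_1^2$, a unit scalar multiple of $x_1^2$; since scaling by a unit does not affect the level, Corollary~\ref{linforms} gives level two, and this is precisely the ``$f=\ell^2$ over $\bar k$'' case. For $r\ge 2$ I will expand $f^{p-1}$ by the multinomial theorem and extract the coefficient of the basis monomial $x_1^{p-1}x_2^{p-1}$. A brief congruence check ($2j_i\equiv p-1\pmod p$ with $0\le j_i\le p-1$ forces $j_i=(p-1)/2$ for $i\in\{1,2\}$, while $2j_i\equiv 0\pmod p$ together with $\lfloor 2j_i/p\rfloor=0$ forces $j_i=0$ for $i\ge 3$) shows a unique tuple contributes, giving coefficient $\binom{p-1}{(p-1)/2}c_1^{(p-1)/2}c_2^{(p-1)/2}$. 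By Lucas's theorem $\binom{p-1}{(p-1)/2}\equiv(-1)^{(p-1)/2}\not\equiv 0\pmod p$, so this coefficient is a nonzero scalar and Lemma~\ref{sqfpowers} yields level one.

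The hard part is not the algebra but the bookkeeping: verifying in odd characteristic that no other multinomial tuple contributes to $x_1^{p-1}x_2^{p-1}$, and handling the mild subtlety that a rank-one quadric $cy^2$ may fail to be a square of a linear form in $R$ itself (although it is one over $\bar k$); invoking the invariance of level under scaling by units is the natural way to fit this case into the exceptional clause of the statement.
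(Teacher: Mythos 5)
Your proof is correct and follows essentially the same route as the paper's: Corollary~\ref{linforms} for the exceptional case, diagonalization in odd characteristic followed by reading off the coefficient of $x_1^{p-1}x_2^{p-1}$ in $f^{p-1}$ via Lemma~\ref{sqfpowers}, and a direct inspection of $f^{p-1}=f$ in characteristic $2$. One place where you are more careful than the paper: you explicitly handle the rank-one quadric $c\,x_1^2$ with $c$ a non-square in $k$, which is not literally the square of a linear form over $k$ yet still has level two; the paper's ``otherwise'' clause silently jumps to the case $n\gs 2$, whereas your observation that the level is invariant under scaling by a unit (since $I_e(ug)=I_e(g)$ for $u\in k^{\times}$, using perfectness of $k$) resolves this cleanly and shows the exceptional clause should be read as ``$f$ is, up to a unit, the square of a linear form.''
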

\begin{proof} If $f$ is a power of a linear form, then $f$ has level two by Corollary \ref{linforms}. Otherwise, if $p \ne 2$ there exists a linear change of variables that diagonalizes $f$ (cf.\,\cite[Chapter IV, Proposition 5]{arithmeticcourse}). That is, we can assume that, after a linear change of coordinates, $f=x_1^2 + \cdots + a x_n^2$, where $2 \ls n \ls d$ and $a$ is either $1$ or an element of $k$ which is not a square. Notice that $x_1^{p-1}x_2^{p-1}$ appears with coefficient $\lambda:= {p-1 \choose \frac{p-1}{2}} \in k \smallsetminus \{0\}$ if $n \gs 3$, and with coefficient $a^{(p-1)/2} {p-1 \choose \frac{p-1}{2}} \in k \smallsetminus\{0\}$ if $n=2$. Therefore $f$ has level one by Lemma \ref{sqfpowers}. Finally, if $p=2$ and $f$ is not a power of a linear form, then we can assume that $x_1x_2$ appears with non-zero coefficient in $f^{p-1} = f$, and we conclude using again Lemma \ref{sqfpowers}.
\end{proof}

\begin{proposition}\label{diagonal hypersurfaces: no todas}
Let $f=x_1^t+\cdots+x_d^t \in R$ be a diagonal hypersurface of degree $t\gs 1$. If $t\ls\min\{d,p\}$ and $p\equiv 1\pmod t$, then $f$ has level one.




\end{proposition}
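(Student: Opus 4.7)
The plan is to apply Lemma \ref{sqfpowers}: since $f$ is homogeneous, it is enough to exhibit a single monomial $\mathbf{x}^{\boldsymbol\alpha}$ with $\norm{\boldsymbol\alpha} \leqslant p-1$ that belongs to the support of $f^{p-1}$. I will do this by writing out the multinomial expansion of $f^{p-1}$ explicitly and picking the right index tuple.

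First, by the multinomial theorem,
\[
f^{p-1}=\sum_{i_1+\cdots+i_d=p-1}\binom{p-1}{i_1,\ldots,i_d}x_1^{ti_1}\cdots x_d^{ti_d}.
\]
Distinct tuples $(i_1,\ldots,i_d)$ produce distinct monomials (since the exponent of $x_j$ uniquely recovers $i_j$ by dividing by $t$), so there is no cancellation among the terms of this sum.

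Next I exploit the two hypotheses. Because $p\equiv 1\pmod t$, the integer $q:=(p-1)/t$ is a positive integer (note also that $p\equiv 1\pmod t$ together with $t\leqslant p$ forces $t\leqslant p-1$, so that $q\geqslant 1$). Because $t\leqslant d$, we can choose the tuple $i_1=i_2=\cdots=i_t=q$ and $i_{t+1}=\cdots=i_d=0$, whose sum is $tq=p-1$. The corresponding monomial is
\[
\mathbf{x}^{\boldsymbol\alpha}:=x_1^{tq}\cdots x_t^{tq}=x_1^{p-1}\cdots x_t^{p-1},
\]
which satisfies $\norm{\boldsymbol\alpha}=p-1$ and therefore lies in the $R^p$-module basis $\{\mathbf{x}^{\boldsymbol\alpha}\mid\norm{\boldsymbol\alpha}\leqslant p-1\}$.

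Finally I will check that the coefficient of this monomial in $f^{p-1}$ is nonzero in $k$. That coefficient is the multinomial number $\binom{p-1}{q,\ldots,q,0,\ldots,0}=(p-1)!/(q!)^{t}$. Since $tq=p-1<p$ and $q<p$, every integer appearing in the numerator and denominator is strictly less than $p$ and hence coprime to $p$, so this multinomial coefficient is a nonzero element of $\FF_p\subseteq k$. Combined with the observation above that no cancellation can occur, this shows $\mathbf{x}^{\boldsymbol\alpha}\in\supp(f^{p-1})$, and Lemma \ref{sqfpowers} then yields that $f$ has level one. The only substantive point is the coefficient non-vanishing, which is immediate once the hypotheses $t\leqslant d$, $t\leqslant p$, and $t\mid p-1$ are used to locate the right tuple; no further obstacle is expected.
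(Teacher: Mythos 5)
Your proof is correct and follows essentially the same route as the paper: expand $f^{p-1}$ by the multinomial theorem, identify the tuple $(q,\ldots,q,0,\ldots,0)$ with $q=(p-1)/t$ giving the monomial $x_1^{p-1}\cdots x_t^{p-1}$, observe that the multinomial coefficient $(p-1)!/(q!)^t$ is nonzero mod $p$, and invoke Lemma \ref{sqfpowers}. The only stylistic difference is that you spell out explicitly that distinct index tuples yield distinct monomials (hence no cancellation) and that the coefficient is coprime to $p$ because all factorials involved are of integers below $p$, points the paper leaves implicit.
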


\begin{proof}

Our assumptions on $t,d$ and $p$ allow us to expand $f^{p-1}$ in the following manner:
\[
\ds f^{p-1}=\frac{(p-1)!}{\left(\left(\frac{p-1}{t}\right)!\right)^t}x_1^{p-1}\cdots x_t^{p-1}+\sum_{\substack{i_1+\cdots+i_d=p-1\\ (i_1,\ldots ,i_t)\neq ((p-1)/t,\ldots,(p-1)/t)}} \frac{(p-1)!}{i_1!\cdots i_d!}x_1^{t i_1}\cdots x_d^{t i_d}.
\]
Since $\frac{(p-1)!}{\left(\left(\frac{p-1}{t}\right)!\right)^t} \in k\smallsetminus\{0\}$, the above equality shows that $x_1^{p-1}\cdots x_t^{p-1}$ appears in $\supp (f^{p-1})$ with non-zero coefficient, hence $f$ has level one by Lemma \ref{sqfpowers}.
\end{proof}

The assumptions $p\equiv 1\pmod t$ and $t\ls\min\{d,p\}$ in Proposition \ref{diagonal hypersurfaces: no todas} cannot be removed in general, as the following examples illustrate.

\begin{example}
Let $R:=\mathbb{F}_5 [x,y,z]$ and $f:=x^3+y^3+z^3$. One can check using Macaulay2 that $f$ has level two. Notice that, in this case $3=\deg (f)\ls\min\{3,5\}$ and $5\equiv 2\pmod 3$.

On the other hand, consider now $R:=\mathbb{F}_7 [x,y]$ and $f:=x^3+y^3$. One can check using Macaulay2 that $f$ has level two. In this case $3=\deg (f)> 2=\min\{2,7\}$ and $p=7\equiv 1\pmod 3$.
\end{example}
The diagonal hypersurface considered in Proposition \ref{diagonal hypersurfaces: no todas} is of the form $x_1^t+\cdots+x_d^t$; using a suitable linear change of coordinates, we immediately get the following Corollary, which includes as a particular case Proposition \ref{diagonal hypersurfaces: no todas}.

\begin{corollary}\label{linforms2}
Let $n\ls d$, let $f=\ell_1^t+\cdots+\ell_n^t$ be a diagonal hypersurface of degree $t\gs 1$ made up by linear forms $\ell_1,\ldots ,\ell_n$ which are linearly independent over the field $k$. If $t\ls\min\{n,p\}$ and $p\equiv 1\pmod t$, then $f$ has level one.
\end{corollary}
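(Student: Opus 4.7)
The plan is to reduce Corollary \ref{linforms2} to Proposition \ref{diagonal hypersurfaces: no todas} via a linear change of coordinates, mimicking the deduction of Corollary \ref{linforms} from Theorem \ref{monomial}. The first step is to complete the linearly independent system $\ell_1,\ldots,\ell_n$ to a $k$-basis $\ell_1,\ldots,\ell_d$ of the space of linear forms in $R$ (possible because $n\ls d$), and to define the $k$-algebra automorphism $\phi\colon R \to R$ by $\phi(x_i) = \ell_i$. Setting $g := x_1^t + \cdots + x_n^t$, one then has $\phi(g) = f$.

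Next, I would verify that the level is invariant under $k$-algebra automorphisms of $R$. Since $\phi$ is a ring homomorphism, it satisfies $\phi(r^{p^e}) = \phi(r)^{p^e}$ for every $r \in R$, hence $\phi\bigl(J^{[p^e]}\bigr) = \phi(J)^{[p^e]}$ for every ideal $J \subseteq R$. Combined with the characterization of $I_e(h)$ as the smallest ideal $J$ with $h \in J^{[p^e]}$, this yields $\phi(I_e(h)) = I_e(\phi(h))$ for every $h \in R$. In particular the chains of $p^e$-th root ideals of $g$ and of $f = \phi(g)$ are identified by $\phi$, so they stabilize at the same index, and therefore $f$ and $g$ have the same level.

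Finally, I would argue that $g$ has level one by repeating the strategy of Proposition \ref{diagonal hypersurfaces: no todas}. Expanding the multinomial
\[
g^{p-1} = \sum_{i_1+\cdots+i_n = p-1} \binom{p-1}{i_1,\ldots,i_n} x_1^{ti_1} \cdots x_n^{ti_n},
\]
the tuple $i_1 = \cdots = i_t = (p-1)/t$, $i_{t+1} = \cdots = i_n = 0$ is admissible thanks to the hypotheses $t \ls n$ and $p \equiv 1 \pmod t$, and produces the monomial $x_1^{p-1} \cdots x_t^{p-1}$ with nonzero multinomial coefficient in $k$ (the inequality $t \ls p$ ensures that $\bigl((p-1)/t\bigr)!$ does not vanish in characteristic $p$). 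Since $g$ is homogeneous and this monomial has all exponents $\ls p-1$, Lemma \ref{sqfpowers} yields that $g$ has level one, completing the proof. The only delicate point in the plan is the invariance of the level under linear changes of coordinates, but this is essentially automatic: ring homomorphisms commute with $p^e$-th powers, and then the invariance follows from the universal property characterization of $I_e$.
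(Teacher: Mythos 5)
Your proposal is correct and follows essentially the same route as the paper: reduce to the diagonal hypersurface in coordinate variables by a linear change of coordinates, then invoke the multinomial expansion argument of Proposition \ref{diagonal hypersurfaces: no todas} together with Lemma \ref{sqfpowers}. The paper leaves the invariance of the level under a $k$-algebra automorphism implicit, while you spell it out via $\phi(I_e(h)) = I_e(\phi(h))$; you also correctly note that because $g = x_1^t+\cdots+x_n^t$ may involve only $n < d$ of the variables, one must rerun the multinomial computation in $k[x_1,\ldots,x_d]$ rather than literally cite the proposition, and this is handled properly.
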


Before going on, we want to review the following notion (see \cite[page 243]{TwentyFourHours}):

\begin{definition}
A polynomial $f\in R$ is said to be \emph{regular} provided
\[
\operatorname{Tj}(f):=\left(f,\frac{\partial f}{\partial x_1},\ldots ,\frac{\partial f}{\partial x_d}\right)=R,
\]
where $\operatorname{Tj}(f)$ denotes the Tjurina ideal attached to $f$.
\end{definition}
In characteristic zero, a polynomial is regular if and only if its Bernstein-Sato polynomial is $b_f(s) = s+1$ \cite[Theorem 23.12]{TwentyFourHours}. In this case, $R_f$ is generated by $1/f$ as a $D$-module.
\begin{proposition}\label{some regular polynomials of level one}
Let $k$ be a perfect field of characteristic $2$, and let $f\in k[x_1,\ldots ,x_d]$ be regular. Then, $f$ has level one.
\end{proposition}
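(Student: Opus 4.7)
The plan is to exploit the simplification $f^{p-1}=f$ that occurs when $p=2$, together with the fact that in characteristic $2$ the derivative of any square is zero. First I would note that, in our setting, showing $f$ has level one amounts to proving $I_1(f^{p-1})=I_1(f)=R$. Since $k$ is perfect and $p=2$, the set $\{x^{\alpha} : \alpha\in\{0,1\}^d\}$ is an $R^2$-basis of $R$, so there is a unique expansion
\[
f=\sum_{\alpha\in\{0,1\}^d}f_{\alpha}^{2}\,\mathbf{x}^{\alpha},
\]
and by definition $I_1(f)=(f_{\alpha}:\alpha\in\{0,1\}^d)$.

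Next, I would compute the partial derivatives of $f$ directly from this expansion. Because $\partial_{x_i}(g^2)=2g\,\partial_{x_i}g=0$ in characteristic $2$, and because $\partial_{x_i}\mathbf{x}^{\alpha}=\alpha_i\,\mathbf{x}^{\alpha-e_i}$, the Leibniz rule yields
\[
\frac{\partial f}{\partial x_i}\;=\;\sum_{\alpha\in\{0,1\}^d,\ \alpha_i=1} f_{\alpha}^{2}\,\mathbf{x}^{\alpha-e_i}\;\in\; I_1(f)^{[2]}.
\]
Moreover $f$ itself visibly lies in $I_1(f)^{[2]}$. Combining these observations, every generator of the Tjurina ideal $\operatorname{Tj}(f)=(f,\partial f/\partial x_1,\ldots,\partial f/\partial x_d)$ lies in $I_1(f)^{[2]}\subseteq I_1(f)$.

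Finally, regularity of $f$ gives $\operatorname{Tj}(f)=R$, so $I_1(f)=R$, which is exactly the condition for $f$ to have level one. There is no real obstacle here: the whole argument collapses because in characteristic $2$ we have $p-1=1$, which both kills the usual chain-rule factors that would otherwise appear in $\partial_{x_i}(g^2)$ and makes $f^{p-1}$ coincide with $f$. I would only double-check that the formula $\partial_{x_i}\mathbf{x}^{\alpha}=\alpha_i\,\mathbf{x}^{\alpha-e_i}$ is applied with $\alpha_i\in\{0,1\}$ (so the coefficient is literally $0$ or $1$ and no subtleties arise), which is automatic from the choice of basis.
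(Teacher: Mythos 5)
Your proof is correct, and it takes a genuinely different route from the paper's. The paper works on the operator side: from the regularity relation $1 = r_0 f + \sum_j r_j\,\partial f/\partial x_j$ it builds the operator $\delta := r_0 + \sum_{j} r_j\,\partial/\partial x_j \in \DD^{(1)}$ (each $\partial/\partial x_j = D_{x_j,1}$ is $R^2$-linear since $p=2$), observes $\delta(f)=1$, and concludes $\delta(1/f)=1/f^2$, so the level is at most (hence exactly) one. You instead work on the ideal side: expanding $f$ in the $R^2$-basis and using that squares have vanishing derivative in characteristic $2$, you show that $f$ and each $\partial f/\partial x_i$ lie in $I_1(f)^{[2]}\subseteq I_1(f)$, hence $R=\operatorname{Tj}(f)\subseteq I_1(f)$, forcing $I_1(f^{p-1})=I_1(f)=R$. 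The two arguments are essentially dual — producing $\delta\in\DD^{(1)}$ with $\delta(f)=1$ is equivalent to showing $1\in I_1(f)$ — but yours isolates more transparently where characteristic $2$ enters (the vanishing of $\partial(g^2)$ pushes $\operatorname{Tj}(f)$ into $I_1(f)^{[2]}$), whereas the paper's version is more constructive and hands you the operator $\delta$ explicitly, which fits the algorithmic emphasis of the article. Both are short and correct.
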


\begin{proof}
Since $f$ is regular, there are $r_0,r_1,\ldots ,r_d\in R$ such that
\[
1=r_0 f+r_1 \frac{\partial f}{\partial x_1}+\cdots +r_d \frac{\partial f}{\partial x_d}.
\]
In this way, setting
\[
\delta:= r_0+\sum_{j=1}^d  \frac{\partial}{\partial x_j}
\]
it follows that $\delta (f)=1$ and therefore $f$ has level one.
\end{proof}

\begin{remark}\label{more regular polynomials of level one}
A very easy way to produce polynomials which are simultaneously regular and of level one in arbitrary prime characteristic works as follows.
Let $k$ be a perfect field of prime characteristic $p$, let $R=k[x_1,\ldots ,x_d]$, and assume that $f\in R$ is a non-zero polynomial of the form $f=\lambda x_i +g$, for some $1\ls i\ls d$, some $\lambda\in k -\{0\}$, and some $g\in R$ such that
\[
\frac{\partial g}{\partial x_i}=0\quad (\text{i.e., }g\in k[x_1,\ldots ,x_{i-1},\widehat{x_i},x_{i+1},x_{i+2},\ldots ,x_d]).
\]
Then, $f$ is regular and of level one; indeed, the fact that $f$ is of level one follows directly from Proposition \ref{sqfree}.
\end{remark}


\section{Elliptic Curves}\label{elliptic curves section}
Let $p \in \ZZ$ be a prime and let $\CC \subseteq \PP^2_{\FF_p}$ be an elliptic curve defined by an homogeneous cubic $f(x,y,z) \in \FF_p[x,y,z]$. We want to review here the following notion (see \cite[13.3.1]{Husemoller}).
\begin{definition} $\CC$ is said to be ordinary if the monomial $(xyz)^{p-1}$ appears in the expansion of $f^{p-1}$ with non-zero coefficient. Otherwise, $\CC$ is said to be supersingular.
\end{definition}
The general form of a cubic defining an elliptic curve is the following
\[
f= y^2z+a_1xyz +a_3yz^2-x^3-a_2x^2z-a_4xz^2-a_6z^3,
\]
where $a_1,\ldots,a_6 \in \FF_p$. When $p\ne 2,3$, the expression above can be further simplified to
\[
f= y^2z - x^3 +axz^2 + bz^3,
\]
for $a,b \in \FF_p$ (see \cite[3.3.6]{Husemoller} for details). We are now interested in computing the level of elliptic curves $\CC$. We are mainly interested in upper bounds, since it is easy to see from Lemma \ref{sqfpowers} that any ordinary elliptic curve has level one, and that any supersingular elliptic curve has level at least two. First, we explore the low characteristic cases, where the list of possibilities (up to isomorphism) is very limited.
\begin{proposition} \label{char_2_3} Let $\CC \subseteq \PP^2_{\FF_p}$ be a supersingular elliptic curve defined by a cubic $f \in \FF_p[x,y,z]$. If $p = 2$ or $p = 3$, then $f$ has level two.
\end{proposition}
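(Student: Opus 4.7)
The plan is to proceed by explicit case analysis based on the classification of supersingular elliptic curves in characteristics $2$ and $3$. Since the discussion preceding the proposition shows that supersingular curves have level at least $2$, the task is to establish the matching upper bound; by Theorem \ref{first step of the algorithm}(ii), this amounts to verifying $f^{p^2-p} \in I_2(f^{p^2-1})^{[p^2]}$.

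First I would reduce to a finite list of normal forms via admissible coordinate changes. In characteristic $2$, starting from the Weierstrass form, supersingularity gives $a_1 = 0$, nonsingularity gives $a_3 \neq 0$ (forcing $a_3 = 1$), and the substitution $x \mapsto x + a_2$ eliminates the $x^2 z$ term; this leaves $f = y^2 z + yz^2 + x^3 + a_4 xz^2 + a_6 z^3$ with $(a_4, a_6) \in \FF_2^2$, so four cases. In characteristic $3$, from the short Weierstrass form $y^2 z - x^3 - a_2 x^2 z - a_4 xz^2 - a_6 z^3$, supersingularity forces $a_2 = 0$ and nonsingularity forces $a_4 \neq 0$, leaving $f = y^2 z - x^3 - a_4 xz^2 - a_6 z^3$ with $a_4 \in \FF_3 \setminus \{0\}$ and $a_6 \in \FF_3$: six cases.

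For characteristic $2$, the Frobenius identity $(g+h)^2 = g^2 + h^2$ gives $f^2 = y^4 z^2 + y^2 z^4 + x^6 + a_4 x^2 z^4 + a_6 z^6$, and every monomial appearing lies visibly in $(x^4, y^4, z^4) = \m^{[4]}$, so $f^2 \in \m^{[4]}$. Expanding $f = \sum f_\beta^2 \mathbf{x}^\beta$ over $R^2$ yields $f_{100} = x + a_4 z$, $f_{010} = z$, $f_{001} = y + a_6 z$, so $I_1(f) = \m$; hence $I_2(f^3) \subseteq \m$, and it remains to verify the reverse inclusion. This is done by expanding $f^3 = f \cdot f^2$ in the basis of $R$ over $R^4$ and checking directly that the resulting coefficients together generate $\m$ in each of the four cases. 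Thus $I_2(f^3) = \m$, which combined with $f^2 \in \m^{[4]}$ gives $f^2 \in I_2(f^3)^{[4]}$, so $f$ has level $2$.

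The argument in characteristic $3$ follows the same blueprint with $f^6$ and $f^8$ in place of $f^2$ and $f^3$; the Frobenius identity still produces $f^3$ (and hence $f^6 = (f^3)^2$) term by term, but the expansion of $f^8$ in the basis of $R$ over $R^9$ is combinatorially heavier. The main obstacle is precisely this bookkeeping, which I would dispatch by running the six cases through Algorithm \ref{calculo del nivel} as implemented in Macaulay2, certifying that the computed level equals $2$ in every case.
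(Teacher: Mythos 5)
Your argument is correct in outline and reaches the same conclusion, but by a genuinely different route than the paper. The paper invokes Husemoller's classification of supersingular elliptic curves in characteristics $2$ and $3$ to reduce to a \emph{single} Weierstrass normal form in each characteristic, and then simply exhibits an explicit differential operator $\delta\in\DD^{(2)}$ with $\delta(f^{p^2-1})=f^{p^2-p}$ for each of the two resulting polynomials; this certifies the upper bound on the level directly, without ever computing $I_2(f^{p^2-1})$. You instead replace the classification input with an elementary normalization of the Weierstrass coefficients carried out entirely over $\FF_p$, producing a small but non-minimal list of forms (four in characteristic $2$, six in characteristic $3$), and you replace the exhibited operator with a verification of the ideal criterion $f^{p^2-p}\in I_2(f^{p^2-1})^{[p^2]}$ from Theorem~\ref{first step of the algorithm}(ii). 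This buys you two things: you avoid the $j$-invariant uniqueness statement, and since all your coordinate changes are over $\FF_p$ there is no need to worry about whether the level descends along an $\overline{\FF}_p$-isomorphism — a point the paper's appeal to the classification quietly elides. Your characteristic-$2$ chain ($f^2\in\m^{[4]}$, $I_1(f)=\m$, hence $I_2(f^3)\subseteq\m$, and $I_2(f^3)=\m$ giving $f^2\in I_2(f^3)^{[4]}$) is also largely hand-checkable, unlike the paper's operator table, which is effectively a machine-produced certificate. What your approach costs you is more cases, and, as written, you still defer the decisive verifications — the claim $I_2(f^3)=\m$ in each of the four characteristic-$2$ cases and the entire characteristic-$3$ analysis — to Macaulay2. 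That is not disqualifying (the paper's own proof is a computation in disguise), but to make the argument self-contained you should record the output of those Algorithm~\ref{calculo del nivel} runs, or at least carry out the finite, mechanical expansion of $f^3$ for the four characteristic-$2$ polynomials explicitly, since that part is short enough to present.
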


\begin{proof} Set $D:=D_{x,p^2-1}D_{y,p^2-1}D_{z,p^2-1}$. By \cite[13.3.2 and 13.3.3]{Husemoller} there are only the following two cases, up to isomorphism:
\begin{center}
\begin{tabular}{|c|c|c|}
\hline
$p$ & Elliptic curve & Differential operator \\ 
\hline
$2$ & $x^3+y^2z+yz^2$ & $ 
\begin{array}{c} \\
y^2 D x^3z+ z^2Dx^3y+x^2Dxyz^2 \\ \\
\end{array}$\\ 
\hline
$3$ & $x^3-xz^2-y^2z$ & $
\begin{array}{c} \\ (x^6z^3-x^3y^6)Dx^4z^5+\\
+(x^9+x^3z^6+y^6z^3)Dxy^8+y^3z^6Dx^4y^5 \\ \\
\end{array}$\\  
\hline
\end{tabular}
\end{center}
The table above is exhibiting a differential operator of level two for each polynomial, therefore the level is at most two in all such cases. We have already noticed that $\CC$ is ordinary if and only if $f$ has level one. This shows that when $p=2$ or $p=3$, $\CC$ is supersingular if and only if $f$ has level two.
\end{proof}

Recall that $R = \FF_p[x,y,z]$ is a free $R^{p^2}$-module with basis given by $\{x^ry^sz^t \mid 0 \ls r,s,t \ls p^2-1\}$. For a polynomial $g \in \FF_p[x,y,z]$ consider 
\[
\ds g = \sum_{0 \ls r,s,t \ls p^2-1} (c(r,s,t))^{p^2} \ x^ry^sz^t,
\]
and recall that by Proposition \ref{propiedades del ideal raiz}, with this notation, $I_2(g)$ is the ideal generated by the elements $c(r,s,t)$, for $0 \ls r,s,t \ls p^2-1$.
\begin{remark} If $f \in \FF_p[x,y,z]$ is a cubic and $g = f^{p^2-1}$, then for any $0 \ls r,s,t \ls p^2-1$ one has
\[
\deg(c(r,s,t)) \ls \left \lfloor \frac{\deg(f^{p^2-1})}{p^2}\right \rfloor = \left\lfloor \frac{3(p^2-1)}{p^2} \right\rfloor = 2.
\]
In particular, $I_2(f^{p^2-1}) = \ds \left(c(r,s,t) \mid 0 \ls r,s,t \ls p^2-1\right)$ is generated in degree at most two. 
\end{remark}
For the rest of the section, we will denote $I_1(f^{p-1})$ and $I_2(f^{p^2-1})$ simply by $I_1$ and $I_2$.

\subsection{Preliminary computations}
The purpose of this part is to single out some technical facts which will be used for proving the main result of this section; namely, Theorem \ref{elliptic_curves}.

\begin{lemma} \label{y} Let $f = y^2z-x^3+axz^2+bz^3 \in \FF_p[x,y,z]$, where $p \ne 2,3$. Then 
\[
\ds c(0,p^2-2,p^2-1) = y.
\]
\end{lemma}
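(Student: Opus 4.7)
The plan is to extract $c(0, p^2-2, p^2-1)$ directly from the multinomial expansion of $f^{p^2-1}$. Recall that if we write
\[
f^{p^2-1} = \sum_{0 \ls r,s,t \ls p^2-1} c(r,s,t)^{p^2} \, x^r y^s z^t,
\]
then $c(r,s,t)^{p^2}$ collects precisely those monomial terms of $f^{p^2-1}$ whose exponent vector has the form $(r + p^2 A, s + p^2 B, t + p^2 C)$ with $A,B,C \gs 0$. So I would first expand
\[
f^{p^2-1} = \sum_{i+j+k+\ell = p^2-1} \binom{p^2-1}{i,j,k,\ell} (-1)^j a^k b^\ell \, x^{3j+k} y^{2i} z^{i+2k+3\ell},
\]
and then isolate the quadruples $(i,j,k,\ell)$ for which $3j+k \equiv 0$, $2i \equiv p^2-2$, and $i+2k+3\ell \equiv p^2-1$ modulo $p^2$.

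The crucial step is a parity analysis on the $y$-exponent: writing $2i = p^2-2+p^2 B$, since $p$ is odd and $p^2-2$ is odd, $B$ must be odd; combined with the bound $i \ls p^2-1$, this forces $B=1$ and $i=p^2-1$. The constraint $i+j+k+\ell = p^2-1$ then immediately yields $j = k = \ell = 0$, and the remaining congruences on the $x$- and $z$-exponents are automatically met with $A = C = 0$. In other words, exactly one multinomial term contributes, and this is the only real piece of work in the proof.

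That unique term is $\binom{p^2-1}{p^2-1,0,0,0}\, y^{2(p^2-1)} z^{p^2-1} = y^{2p^2-2} z^{p^2-1}$, which I would rewrite as $y^{p^2}\cdot y^{p^2-2} z^{p^2-1}$ to match the $R^{p^2}$-basis decomposition. Since $\FF_p$ is perfect and the coefficient $y^{p^2}$ is already a $p^2$-th power in $R$, uniqueness of $p^2$-th roots yields $c(0, p^2-2, p^2-1) = y$. The main obstacle, as noted, is not computational but rather spotting that the sharp parity constraint on $i$ collapses the sum to a single term; everything else follows mechanically, and crucially the specific values of $a$ and $b$ never enter.
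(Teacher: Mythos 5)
Your proof is correct and takes essentially the same route as the paper: both arguments reduce to the observation that the $y$-exponent $2i$ of each multinomial term is even while $p^2-2$ is odd, which forces $2i = 2p^2-2$ (hence $i = p^2-1$) as the only contribution to the basis monomial $y^{p^2-2}z^{p^2-1}$. The paper packages the same parity constraint via a "degree $1$, so the coefficient is a linear form, hence one of $x$, $y$, $z$" case analysis, but the substance is identical.
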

\begin{proof} A monomial in the expansion of $f^{p^2-1}$ will have the form $(y^2z)^h(-x^3)^i(axz^2)^j(bz^3)^k$, where $h+i+j+k=p^2-1$. Looking at the coefficient of $y^{p^2-2}z^{p^2-1}$ in such expansion, by degree considerations we only have three possibilities:
\[
\ds y^{2h}x^{3i+j} z^{h+2j+3k} = \left\{
\begin{array}{ll}
x^{p^2} \cdot y^{p^2-2}z^{p^2-1} \\
y^{p^2} \cdot y^{p^2-2}z^{p^2-1} \\
z^{p^2} \cdot y^{p^2-2}z^{p^2-1} 
\end{array}
\right.
\]
Since $p^2-2$ is not even, there is no choice of $h$ that realizes the first and the third cases. So we are left with the second, which is achieved only by the choice $h=p^2-1$, $i=j=k= 0$. This shows that the coefficient of $y^{p^2-2}z^{p^2-1}$ in the expansion of $f^{p^2-1}$ is precisely $y^{p^2}$, and the Lemma follows.
\end{proof}

Before going on, we need to review the following classical result, due to Legendre, because it will play some role later in this section (see Proof of Lemma \ref{cusp}). We refer to \cite[page 8]{AignerZiegler2004} for a proof.

\begin{theorem}[Legendre]
Let $n\gs 0$ be a non-negative integer, let $p$ be a prime number, and let $\sigma_p (n)$ be the sum of the base $p$ digits of $n$. Then,
\[
v_p \left(n!\right)=\frac{n-\sigma_p (n)}{p-1},
\]
where, given any non-negative integer $m\gs 0$,
\[
v_p\left(m\right):=\max\{t\gs 0:\ p^t\mid m\}.
\]
\end{theorem}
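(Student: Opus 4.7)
The plan is to reduce Legendre's formula to the more elementary identity
\[
v_p(n!) \;=\; \sum_{i=1}^{\infty} \left\lfloor \frac{n}{p^i} \right\rfloor,
\]
and then to evaluate the right-hand side via the base-$p$ expansion of $n$. For the elementary identity, I would argue by counting: the $p$-adic valuation of $n!$ is the total number of factors of $p$ contributed by $1, 2, \ldots, n$, and for each fixed $i \geq 1$ the number of integers in $\{1,\ldots,n\}$ divisible by $p^i$ is exactly $\lfloor n/p^i \rfloor$. Summing the contributions over $i \geq 1$ gives the stated formula. (This part is classical and can also be phrased inductively.)

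Next, writing $n = \sum_{k=0}^{m} a_k p^k$ in base $p$, with $0 \leq a_k \leq p-1$ and $\sigma_p(n) = \sum_{k=0}^{m} a_k$, I would observe that
\[
\left\lfloor \frac{n}{p^i} \right\rfloor \;=\; \sum_{k=i}^{m} a_k \, p^{k-i},
\]
since the terms with $k < i$ contribute less than $1$ after division by $p^i$ and are discarded by the floor. Substituting this into the sum over $i \geq 1$ and swapping the order of summation, I would obtain
\[
v_p(n!) \;=\; \sum_{k=1}^{m} a_k \sum_{i=1}^{k} p^{k-i} \;=\; \sum_{k=1}^{m} a_k \cdot \frac{p^k - 1}{p-1},
\]
by the geometric series formula.

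Finally I would split the last expression as
\[
\frac{1}{p-1}\left( \sum_{k=1}^{m} a_k p^k \;-\; \sum_{k=1}^{m} a_k \right) \;=\; \frac{1}{p-1}\bigl( n - \sigma_p(n)\bigr),
\]
where in the first sum the missing $k=0$ term is $a_0 p^0 \cdot 0 = 0$ and in the second the missing $k=0$ term equals $a_0$, which is exactly what is subtracted off to turn $\sum_{k=1}^{m} a_k$ into $\sigma_p(n) = \sum_{k=0}^{m} a_k$ (with a compensating $a_0$ in the first bracket). This yields the claimed formula $v_p(n!) = (n - \sigma_p(n))/(p-1)$.

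The only real obstacle is bookkeeping: keeping the index ranges straight when swapping the double sum and when handling the $k=0$ digit. Since both manipulations are purely combinatorial identities, I do not expect any deeper difficulty, and the cited reference \cite[page 8]{AignerZiegler2004} makes clear that this is a standard textbook computation.
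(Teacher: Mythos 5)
Your proof is correct and follows the standard route that the paper itself defers to (the cited passage of Aigner--Ziegler): first reduce to the identity $v_p(n!) = \sum_{i\ge 1}\lfloor n/p^i\rfloor$, then evaluate each floor via the base-$p$ expansion and collapse the resulting double sum with the geometric series $\sum_{i=1}^{k}p^{k-i} = (p^k-1)/(p-1)$. The only thing I would tighten is the phrasing of the last step: it is cleanest to note that the $k=0$ term $a_0\cdot\frac{p^0-1}{p-1}$ vanishes, so $v_p(n!)=\sum_{k=0}^{m}a_k\frac{p^k-1}{p-1}=\frac{1}{p-1}\bigl(\sum_{k=0}^{m}a_kp^k-\sum_{k=0}^{m}a_k\bigr)=\frac{n-\sigma_p(n)}{p-1}$ directly, which avoids the somewhat garbled bookkeeping about a ``compensating $a_0$''; this is a cosmetic issue, not a gap.
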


\begin{lemma}\label{issue of divisibility}
Let $p\neq 2$ be a prime. Then,
\[
\lambda :=\frac{(p^2-1)!}{\left(\left(\frac{p^2-1}{2}\right)!\right)^2}\neq 0\pmod p.
\]
\end{lemma}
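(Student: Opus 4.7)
The plan is to recognize that $\lambda$ is simply the central binomial coefficient $\binom{p^2-1}{(p^2-1)/2}$, and then to compute $v_p(\lambda)$ directly with Legendre's theorem, showing it equals zero.

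First I would unwind the expressions in base $p$. Since $p$ is odd, $\tfrac{p-1}{2}$ is an integer strictly less than $p$, so the base-$p$ expansion of $\tfrac{p^2-1}{2} = \tfrac{p-1}{2}(p+1)$ is $\tfrac{p-1}{2}\cdot p + \tfrac{p-1}{2}$, giving $\sigma_p\!\bigl(\tfrac{p^2-1}{2}\bigr) = p-1$. Similarly, $p^2-1 = (p-1)p + (p-1)$, so $\sigma_p(p^2-1) = 2(p-1)$.

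Next I would plug these into Legendre's formula. This yields
\[
v_p\!\left((p^2-1)!\right) = \frac{(p^2-1) - 2(p-1)}{p-1} = p-1,
\qquad
v_p\!\left(\!\left(\tfrac{p^2-1}{2}\right)!\right) = \frac{\tfrac{p^2-1}{2} - (p-1)}{p-1} = \frac{p-1}{2}.
\]
Subtracting the denominator contribution from the numerator contribution, we get $v_p(\lambda) = (p-1) - 2\cdot\tfrac{p-1}{2} = 0$, so $\lambda \not\equiv 0 \pmod p$.

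There is no real obstacle here; the only point requiring care is the use of $p \neq 2$ to ensure $\tfrac{p-1}{2}$ is an integer less than $p$, so that the base-$p$ expansion of $\tfrac{p^2-1}{2}$ is genuinely two-digit and $\sigma_p$ can be read off directly. (As a sanity check, one could alternatively invoke Lucas's theorem to write $\binom{p^2-1}{(p^2-1)/2} \equiv \binom{p-1}{(p-1)/2}^2 \equiv (-1)^{p-1} = 1 \pmod p$, which recovers the same conclusion.)
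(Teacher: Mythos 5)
Your proof is correct and follows essentially the same route as the paper: identify the base-$p$ expansions of $p^2-1$ and $(p^2-1)/2$, then apply Legendre's formula to conclude $v_p(\lambda)=0$. The parenthetical Lucas-theorem check is a nice independent confirmation but is not part of the paper's argument.
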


\begin{proof}
On one hand, $p^2-1=(p-1)(1+p)$ is the base $p$ expansion of $p^2-1$; on the other hand, since $p\neq 2$ it follows that
\[
\frac{p^2-1}{2}=\left(\frac{p-1}{2}\right)(1+p)
\]
is the base $p$ expansion of $(p^2-1)/2$. Keeping in mind these two facts it follows, using Legendre's Theorem, that
\begin{align*}
v_p \left(\lambda\right)& =v_p \left((p^2-1)!\right)-2v_p\left(\left(\frac{p^2-1}{2}\right)!\right)\\ & =\frac{p^2-1-2(p-1)}{p-1}-2\cdot\left(\frac{\frac{p^2-1}{2}-2\left(\frac{p-1}{2}\right)}{p-1}\right)=p-1-(p-1)=0,
\end{align*}
hence $p$ does not divide $\lambda$ and therefore we can ensure that $\lambda\neq 0\pmod p$.
\end{proof}

\begin{lemma}\label{cusp}
Let $p\neq 2,3$ be a prime, and let $f=y^2z-x^3\in\mathbb{F}_p [x,y,z]$. Then, $I_1 = I_2 = (x,y)$. In particular, $f$ has level two.
\end{lemma}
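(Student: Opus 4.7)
Since $f$ is homogeneous of degree $3$, Remark \ref{I_e homog} shows that both $I_1$ and $I_2$ are homogeneous ideals. The plan is to expand $f^{p^e-1}$ in the $R^{p^e}$-basis $\{x^ay^bz^c : 0 \ls a, b, c \ls p^e-1\}$ for $e=1,2$, read off the generators of $I_e$, show that both ideals equal $(x,y)$, and then conclude using Theorem \ref{first step of the algorithm}.

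I would begin with the binomial expansion
\[
f^{p^e-1} = \sum_{j=0}^{p^e-1}\binom{p^e-1}{j}(-1)^{j}(y^2z)^{p^e-1-j}x^{3j}.
\]
A standard application of Lucas' theorem (together with the fact that $p$ is odd) yields $\binom{p^e-1}{j} \equiv (-1)^{j} \pmod{p}$, so each coefficient collapses to $1$ in $\FF_p$ and one obtains
\[
f^{p^e-1} = \sum_{j=0}^{p^e-1} x^{3j}\, y^{2(p^e-1-j)}\, z^{p^e-1-j}.
\]
For each $j$ the $z$-exponent $p^e-1-j$ already lies in $\{0,\ldots,p^e-1\}$, while the $y$-exponent lies in $[0,2p^e-2]$ and the $x$-exponent in $[0,3p^e-3]$. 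Extracting the $p^e$-th power parts, the contribution to the corresponding basis element is a monomial coefficient in $\{1,\,x,\,x^2,\,y,\,xy,\,x^2y\}$. Distinct $j$ produce distinct $z$-residues $p^e-1-j$, so no two contributions collide and no cancellation occurs; thus $I_e$ is generated by these monomials.

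The remaining step is to decide exactly which monomials arise. The constant $1$ would appear only if some $j$ satisfied both $2(p^e-1-j) < p^e$ and $3j < p^e$, i.e.\ $(p^e-1)/2 < j \ls (p^e-1)/3$, which is vacuous for $p \gs 5$; hence $I_e \subseteq (x,y)$. For the reverse inclusion, $j=0$ forces the $y$-power part to be $y$ (since $2(p^e-1)\gs p^e$) and contributes $y$ to $I_e$; and $j=(p^e+1)/2$, an integer because $p^e$ is odd, gives $2(p^e-1-j) = p^e-3 < p^e$ (so $y$-part is trivial) together with $p^e \ls 3j = \tfrac{3(p^e+1)}{2} \ls 2p^e-1$ (the last inequality reducing to $p^e \gs 5$), so its $x$-power part is $x$ and it contributes $x$ to $I_e$. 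Therefore $I_e = (x,y)$ for both $e=1$ and $e=2$.

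Finally, since $I_0 = R \ne (x,y) = I_1 = I_2$, the infimum defining the level in Theorem \ref{first step of the algorithm} is attained at $s=2$, so $f$ has level two. The main bookkeeping lies in the Euclidean decomposition of the $x$- and $y$-exponents modulo $p^e$, but once the simplification $\binom{p^e-1}{j}(-1)^j \equiv 1 \pmod p$ is in hand, no genuine obstacle remains.
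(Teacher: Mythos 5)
Your proof is correct, but it takes a genuinely different route from the paper's. The paper first shows $(x,y)\subseteq I_2$ by isolating a single term of the expansion, namely $i=(p^2-1)/2$, and verifying that the central coefficient $\binom{p^2-1}{(p^2-1)/2}$ is nonzero modulo $p$ via Legendre's valuation formula (Lemma \ref{issue of divisibility}); it then invokes Lemma \ref{y} (a coefficient computation valid for the whole family $y^2z-x^3+axz^2+bz^3$) to get $y\in I_2$, and closes by proving $I_1\subseteq(x,y)$ and using the squeeze $(x,y)\subseteq I_2\subseteq I_1\subseteq(x,y)$. You instead observe via Lucas that $\binom{p^e-1}{j}\equiv(-1)^j\pmod p$, so after combining with the sign $(-1)^j$ every coefficient in the binomial expansion collapses to $1$, leaving the clean form $f^{p^e-1}=\sum_j x^{3j}y^{2(p^e-1-j)}z^{p^e-1-j}$, from which $I_e$ can be read off completely for $e=1,2$. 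This is more self-contained and transparent for the specific binomial $y^2z-x^3$ — no auxiliary valuation lemma, no separate lemma for the $y$-generator, and you get $I_1=(x,y)$ directly rather than by a sandwiching argument. The trade-off is that your simplification relies crucially on $f$ having exactly two terms; the paper's choice of tools (Lemma \ref{y} in particular) is aimed at the full supersingular cubic with $a,b\neq 0$ in Theorem \ref{elliptic_curves}, where multinomial coefficients do not collapse and the Lucas shortcut is unavailable. One small imprecision worth fixing: the condition $2(p^e-1-j)<p^e$ gives $j\geq (p^e-1)/2$ rather than a strict inequality, and $3j<p^e$ gives $j\leq\lfloor p^e/3\rfloor$; these are off-by-one from what you wrote, but since the essential point is that the lower bound strictly exceeds the upper bound whenever $p^e>3$, the vacuity conclusion is unaffected.
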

\begin{proof} Consider the expansion
\[
\ds f^{p^2-1} = \sum_{i=0}^{p^2-1} \frac{(p^2-1)!}{i!(p^2-1-i)!} y^{2i}z^ix^{3(p^2-1-i)}.
\]
For $i=(p^2-1)/2$ we obtain the monomial
\[
\ds \lambda y^{p^2-1}z^{(p^2-1)/2}x^{3(p^2-1)/2} = \lambda x^{p^2} \cdot \left(x^{(p^2-3)/2}y^{p^2-1}z^{(p^2-1)/2}\right),
\]
where $\lambda = \frac{(p^2-1)!}{\left(\left(\frac{p^2-1}{2}\right)!\right)^2} \ne 0$ (indeed, this follows by Lemma \ref{issue of divisibility}). Because of the term $z^i$ in the expansion above, the choice $i=(p^2-1)/2$ is clearly the only one that gives the monomial $x^{(p^2-3)/2}y^{p^2-1}z^{(p^2-1)/2}$ of the basis of $R$ as an $R^{p^2}$-module. Therefore $c\left(\frac{p^2-3}{2},p^2-1,\frac{p^2-1}{2}\right) = \lambda^{1/p^2}x = \lambda x$, and thus $x \in I_2$. In addition, by Lemma \ref{y} we always have $y \in I_2$. Therefore $(x,y) \subseteq I_2$. On the other hand, consider the expansion
\[
\ds f^{p-1} = \sum_{j=0}^{p-1} \frac{(p-1)!}{j!(p^2-1-j)!} y^{2j}z^jx^{3(p-1-j)}.
\] 
We claim that either $2j \gs p$ or $3(p-1-j) \gs p$. In fact, suppose $j < p/2$, or equivalently $j \ls (p-1)/2$, because $j$ is an integer. Then $3(p-1-j) \gs 3(p-1)/2 \gs p$ since $p \gs 5$ by assumption. This shows that all the coefficients $c(r,s,t)^p$ in the expansion of $f^{p-1} = \sum_{0 \ls r,s,t \ls p-1} c(r,s,t)^p x^ry^sz^t$ are contained in $(x,y)^{[p]}$, and thus $I_1 = (c(r,s,t) \ \mid \ 0 \ls r,s,t \ls p-1) \subseteq (x,y)$. Therefore the Lemma follows from the chain of inclusions $(x,y) \subseteq I_2 \subseteq I_1 \subseteq (x,y)$.
\end{proof}
\begin{lemma} \label{a_b_zero} Let $p \ne 2,3$ be a prime and let $\CC$ be a supersingular elliptic curve defined by $f(x,y,z) = y^2z-x^3+axz^2+bz^3$. If either $a=0$ or $b=0$, then $f$ has level two.
\end{lemma}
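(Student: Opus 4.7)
The plan is to establish the chain of inclusions
\[
I_2(f^{p^2-1})\supseteq\m\supseteq I_1(f^{p-1})\supseteq I_2(f^{p^2-1}),
\]
where $\m=(x,y,z)$, so that $I_1=I_2=\m\ne R$; combined with the fact that supersingularity forces the level of $f$ to be at least two, the equality $I_1=I_2$ together with $I_1\ne R$ will give level exactly two via Theorem \ref{first step of the algorithm}. Two subcases must be handled, $a=0$, $b\ne 0$ (Case A) and $b=0$, $a\ne 0$ (Case B); note that $a=b=0$ is excluded since $y^2z-x^3$ is singular and so does not define an elliptic curve.

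The inclusion $I_1\subseteq\m$ is easy: in the expansion $f^{p-1}=\sum c_\alpha^p\mathbf{x}^{\alpha}$ with $\|\alpha\|\le p-1$ and $f$ homogeneous of degree $3$, the only $\alpha$ for which $c_\alpha$ can be a non-zero scalar is $\alpha=(p-1,p-1,p-1)$, and that coefficient (the Hasse invariant of $\CC$) vanishes by supersingularity. The inclusion $I_2\subseteq I_1$ is Proposition \ref{propiedades del ideal raiz}(ii). The substantive part is $\m\subseteq I_2$; we already have $y\in I_2$ from Lemma \ref{y}, so it remains to exhibit $x$ and $z$. I would do so by computing the coefficients $c(p^2-2,0,p^2-1)$ and $c(p^2-1,0,p^2-2)$ in the expansion $f^{p^2-1}=\sum c(r,s,t)^{p^2}x^{r}y^{s}z^{t}$. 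A degree count forces each of these to be a linear form, and the choice $s=0$ kills the $y$-component (the only way to obtain a $y$-exponent of $p^2$ in a monomial of $f^{p^2-1}$ would require $2h=p^2$, impossible for $p$ odd). Expanding $f^{p^2-1}$ trinomially in the three terms of $f$ and setting $h=0$, one then verifies that for each of the two chosen triples exactly one of $\mu=x$ and $\mu=z$ yields a valid choice of multinomial indices, the other being ruled out by a divisibility obstruction modulo $3$ in Case A (using $p^2\equiv 1\pmod 3$) or modulo $2$ in Case B (using $p$ odd).

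The main obstacle is then to check that the surviving multinomial coefficient is a unit in $\FF_p$. In Case B this is immediate: one coefficient equals $a^{p^2-1}=1$ by Fermat (the unique contribution coming from the term $(axz^2)^{p^2-1}$), and the other is a sign times $a^{(p^2-1)/2}\binom{p^2-1}{(p^2-1)/2}$, non-zero by Lemma \ref{issue of divisibility}. In Case A both coefficients reduce, up to a sign and a non-zero power of $b$, to $\binom{p^2-1}{(p^2-1)/3}$; since supersingularity of $y^2z=x^3+bz^3$ forces $p\equiv 2\pmod 3$, writing $p=3m+2$ shows that the base-$p$ digits of $(p^2-1)/3$ are $(2m+1,m)$, both at most $p-1=3m+1$, whence Lucas's theorem gives
\[
\binom{p^2-1}{(p^2-1)/3}\equiv\binom{p-1}{2m+1}\binom{p-1}{m}\equiv(-1)^{m+1}\not\equiv 0\pmod p.
\]
With $x,y,z\in I_2$ established, the chain collapses and $f$ has level exactly two.
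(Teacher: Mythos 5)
Your proof is correct, and it takes a genuinely different route from the paper's in the key computational step. Both arguments reduce to showing $(x,y,z)\subseteq I_2$, obtain $y$ from Lemma~\ref{y}, and then extract $x$ and $z$ by evaluating two further coefficients $c(r,s,t)$ in the $R^{p^2}$-basis expansion of $f^{p^2-1}$. The paper selects triples with $y$-exponent $s=p^2-1$, which pins down $h=(p^2-1)/2$ in the trinomial expansion and lets it reuse Lemma~\ref{issue of divisibility}, the non-vanishing of $\binom{p^2-1}{(p^2-1)/2}$ modulo $p$, in both subcases. You instead pick $s=0$, forcing $h=0$ so that only two of the three terms of $f$ contribute; this makes the elimination of the stray $\mu$ transparent (a parity obstruction when $b=0$, a mod-$3$ obstruction when $a=0$), but in the $a=0$ subcase it leads to the binomial coefficient $\binom{p^2-1}{(p^2-1)/3}$, for which you supply your own Lucas-type check. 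Two small remarks. First, you drop $a=b=0$ on the grounds that $y^2z-x^3$ is singular, which is legitimate given the hypothesis that $\CC$ is an elliptic curve, whereas the paper treats that cusp case anyway via Lemma~\ref{cusp}. Second, your appeal to the external fact that supersingularity of $y^2=x^3+b$ forces $p\equiv 2\pmod 3$ is correct but not actually needed: the base-$p$ digits of $(p^2-1)/3$ are $(m,m)$ when $p=3m+1$ and $(m,2m+1)$ when $p=3m+2$, both below $(p-1,p-1)$, so $\binom{p^2-1}{(p^2-1)/3}\not\equiv 0\pmod p$ by Lucas in either case, and the only use of supersingularity can be confined, as in the paper, to the inclusion $I_1\subsetneq R$.
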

\begin{proof} Notice that, since $\CC$ is supersingular, we have $\FF_p[x,y,z] = R \supsetneq I_1 \supseteq I_2$, and we want to show that $I_1 = I_2$. Also, by Lemma \ref{y} we always have $y\in I_2$. If $a=b=0$, then Lemma \ref{cusp} ensures that $I_1=I_2 = (x,y)$. Now assume $a\ne 0$ and $b=0$. We claim that $c\left(\frac{p^2-1}{2},p^2-1,\frac{p^2-3}{2}\right) = \mu z$, for some $\mu \ne 0$. A monomial in the expansion of $f^{p^2-1}$ will have the form $(y^2z)^h(-x^3)^i(axz^2)^j$ for $h+i+j=p^2-1$. Looking at the terms involving $x^{\frac{p^2-1}{2}}y^{p^2-1}z^{\frac{p^2-3}{2}}$ the possibilities are
\[
\ds y^{2h}x^{3i+j} z^{2j+h} = \left\{
\begin{array}{ll}
x^{p^2} \cdot x^{\frac{p^2-1}{2}}y^{p^2-1}z^{\frac{p^2-3}{2}} \\
y^{p^2} \cdot x^{\frac{p^2-1}{2}}y^{p^2-1}z^{\frac{p^2-3}{2}} \\
z^{p^2} \cdot x^{\frac{p^2-1}{2}}y^{p^2-1}z^{\frac{p^2-3}{2}}
\end{array}
\right.
\]
The second case is not possible, since $2p^2-1$ is not even and hence here is no $h$ that gives it. This forces $h=(p^2-1)/2$, and hence the first case is also not possible, since for such an $h$ the exponent of $z$ must be at least $(p^2-1)/2$. Canceling $y^{p^2-1}z^{\frac{p^2-1}{2}}$ we are left with
\[
\ds x^{3i} z^{2j} = x^{\frac{p^2-1}{2}}z^{p^2-1}
\]
This implies that $j=\frac{p^2-1}{2}$ and $i=0$. Therefore the coefficient of $x^{\frac{p^2-1}{2}}y^{p^2-1}z^{\frac{p^2-3}{2}}$ in the expansion of $f^{p^2-1}$ is
\[
\ds c\left(\frac{p^2-1}{2},p^2-1,\frac{p^2-3}{2}\right)^{p^2} =a^{\frac{p^2-1}{2}} z^{p^2} = (\mu z)^{p^2},
\]
where $\mu = \left(a^{\frac{p^2-1}{2}}\right)^{1/p^2} \ne 0$. This shows the claim. With similar considerations, one can see that 
\[
\ds c\left(\frac{p^2-3}{2},p^2-1,\frac{p^2-1}{2}\right) = x.
\]
This gives $(x,y,z) \subseteq I_2 \subseteq I_1 \subsetneq R$, which forces $I_1 = I_2$ because $I_1$ is a proper homogeneous ideal of $R=\mathbb{F}_p [x,y,z]$, hence $I_1\subseteq\left(x,y,z\right)$. Finally, if $a=0$ and $b \ne 0$ the arguments are completely analogous to the case $a\ne 0$, $b=0$. Here we get
\[
\ds c\left(\frac{p^2-3}{2},p^2-1,\frac{p^2-1}{2}\right) = x \ \ \ \mbox{ and } \ \ \ c\left(0,p^2-1,p^2-2\right) = \lambda z
\]
for some $\lambda \ne 0$. Therefore $(x,y,z) \subseteq I_2 \subseteq I_1 \subsetneq R$, which once again forces $I_1 = I_2$.
\end{proof}

\subsection{Main result}
Next statement is the main result of this section.
\begin{theorem} \label{elliptic_curves} Let $p \in \ZZ$ be a prime number and let $\CC \subseteq \PP^2_{\FF_p}$ be an elliptic curve defined by a cubic $f(x,y,z) \in \FF_p[x,y,z]$. Then
\begin{enumerate}[(i)]
  \setlength{\itemsep}{2pt}
  \setlength{\parskip}{0pt}
  \setlength{\parsep}{0pt}
\item \label{1} $\CC$ is ordinary if and only if $f$ has level one.
\item \label{2} $\CC$ is supersingular if and only if $f$ has level two.
\end{enumerate}
\end{theorem}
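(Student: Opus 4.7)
My plan is to combine the characterization of level-one polynomials from Lemma \ref{sqfpowers} with the preliminary computations of this section. Since $f$ is a homogeneous cubic, $f^{p-1}$ is homogeneous of degree $3(p-1)$, and the only monomial $x^ay^bz^c$ with $0 \ls a,b,c \ls p-1$ of that degree is $(xyz)^{p-1}$. Hence, Lemma \ref{sqfpowers} gives that $f$ has level one if and only if $(xyz)^{p-1} \in \supp(f^{p-1})$, which by definition is equivalent to $\CC$ being ordinary. This immediately gives (i), and also one direction of (ii): if $f$ has level two then it does not have level one, hence $\CC$ is supersingular.

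For the converse implication in (ii), suppose $\CC$ is supersingular; by (i), $f$ has level at least two, so it suffices to verify that $I_1 = I_2$. The cases $p = 2, 3$ are exactly Proposition \ref{char_2_3}. For $p \gs 5$, a linear change of variables reduces $f$ to the Weierstrass form $y^2z - x^3 + axz^2 + bz^3$, and the subcases $a = 0$ or $b = 0$ are covered by Lemma \ref{a_b_zero}. It then remains to handle the generic case $p \gs 5$ with $a, b \ne 0$. By Lemma \ref{y} we already know $y \in I_2$, so the goal will be to show additionally that $x, z \in I_2$: this would give the chain $(x,y,z) \subseteq I_2 \subseteq I_1$, and since $I_1$ is a proper homogeneous ideal of $\FF_p[x,y,z]$ it must be contained in $(x,y,z)$, forcing $I_1 = I_2 = (x,y,z)$.

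To show $x, z \in I_2$, I would pick basis monomials such as $x^{(p^2-3)/2}y^{p^2-1}z^{(p^2-1)/2}$ and $x^{(p^2-1)/2}y^{p^2-1}z^{(p^2-3)/2}$ (modeled on the ones used in Lemma \ref{a_b_zero}), and compute their coefficients $c(r,s,t)$ in the multinomial expansion
\[
f^{p^2-1} = \sum_{h+i+j+k = p^2-1} \binom{p^2-1}{h,i,j,k} (-1)^i a^j b^k\, x^{3i+j} y^{2h} z^{h+2j+3k}.
\]
The constraint $2h \equiv p^2-1 \pmod{p^2}$ together with $0 \ls h \ls p^2-1$ forces $h = (p^2-1)/2$; the remaining congruences in the $x$- and $z$-exponents then determine a small (one-parameter) family of admissible $(i,j,k)$. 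Each such coefficient should then come out as $\alpha\, x + \beta\, z$ for explicit scalars $\alpha, \beta \in \FF_p$ expressible as alternating multinomial sums in $a$ and $b$. The two chosen basis monomials would yield two such linear forms, and showing that they span the $\FF_p$-vector space generated by $x$ and $z$ would give $x, z \in I_2$.

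The main obstacle will be to verify that the multinomial sums produced above are non-zero modulo $p$ for every supersingular pair $(a, b)$ with $a, b \ne 0$. I would attack this through Lucas' theorem applied digit-by-digit in base $p$ to the multinomial coefficients, together with a Legendre-type valuation argument (as in the proof of Lemma \ref{issue of divisibility}). If a single pair of basis monomials fails to give linearly independent generators in $(x, z)$, one should compute several such coefficients and exploit the supersingular constraint on $(a,b)$ to rule out degenerate cancellations.
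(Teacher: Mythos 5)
Your overall plan matches the paper's: part (i) and one direction of (ii) from Lemma \ref{sqfpowers}; for the converse in (ii), reduce to $p\gs 5$ via Proposition \ref{char_2_3}, normalize to Weierstrass form, dispose of $a=0$ or $b=0$ via Lemma \ref{a_b_zero}, and then aim for $(x,y,z)\subseteq I_2$ using $y\in I_2$ (Lemma \ref{y}) plus explicit coefficient computations. That is exactly the paper's scaffolding.

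The gap is precisely where you flag it yourself, and it is not a routine one. The coefficients $c(r,s,t)$ in the case $ab\ne 0$ are alternating sums of multinomial terms in $a$ and $b$, not single multinomial coefficients, so Lucas' theorem and Legendre's formula alone cannot certify non-vanishing: even if every individual term were non-zero mod $p$, the sum could still vanish for some $(a,b)$. Also, your second candidate monomial $x^{(p^2-1)/2}y^{p^2-1}z^{(p^2-3)/2}$ is poorly chosen here: working out the constraints ($h=(p^2-1)/2$, $2j+3k=p^2-1$, hence $3i+j=(p^2-1)/2$) shows the resulting coefficient is a pure scalar multiple of $z$ with no $x$-part, so it gives you only one of the two generators you need, and you still have no argument that this scalar is non-zero for all $(a,b)$. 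Finally, your fallback idea of ``exploiting the supersingular constraint on $(a,b)$'' is a red herring: the paper's argument for $(x,y,z)\subseteq I_2$ uses only $ab\ne 0$, and supersingularity enters only at the very end to ensure $I_1\subsetneq R$.

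What the paper actually does is compute four coefficients of the form $\alpha x+\beta z$, namely $x+\lambda z$, $ax+\mu z$, $-ax+\tau z$, and $\theta x+b^{(p^2-1)/2}z$, and then shows that the resulting $4\times 2$ matrix has rank two by a purely algebraic manipulation: if the first $2\times2$ block is singular (i.e.\ $\mu=a\lambda$), then a reindexing identity on the multinomial sums forces $\tau=0$, and then the block from the last two rows has determinant $-ab^{(p^2-1)/2}\ne0$. This conditional structure — proving $\tau=0$ \emph{under the assumption} of degeneracy — is the key insight missing from your proposal, and it is what turns the outline you wrote into a proof.
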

\begin{proof} Part (\ref{1}) follows from Lemma \ref{sqfpowers}, which also shows that if $f$ has level at least two, then $\CC$ is supersingular. So it is left to show that if $\CC$ is supersingular, then $f$ has level precisely equal to two. By Proposition \ref{char_2_3} we only have to consider the cases where $p\ne 2,3$, thus we can assume that $f$ is of the form
\[
\ds f(x,y,z) = y^2z - x^3 + axz^2 + bz^3.
\]
for $a,b \in \FF_p$. Furthermore, by Lemma \ref{a_b_zero} we can assume that $ab \ne 0$. First we claim that $c\left(\frac{p^2-3}{2},p^2-1,\frac{p^2-1}{2}\right) = x+ \lambda z$, where
\[
\ds \lambda = \sum_{i=0}^{\frac{p^2-7}{6}} \frac{(p^2-1)!}{\left(\frac{p^2-1}{2}\right)!i!\left(\frac{p^2-3}{2}-3i\right)!(2i+1)!} (-1)^i a^{\frac{p^2-3}{2} - 3i} b^{2i+1} \in \FF_p.
\]
In fact a general monomial in the expansion of $f^{p^2-1}$ will have the form $(y^2z)^h(-x^3)^i(axz^2)^j(bz^3)^k$, and by looking at terms that involve $x^{\frac{p^2-3}{2}}y^{p^2-1}z^{\frac{p^2-1}{2}}$, by degree considerations we have three possibilities:
\[
\ds y^{2h}x^{3i+j} z^{h+2j+3k} = \left\{
\begin{array}{ll}
x^{p^2} \cdot x^{\frac{p^2-3}{2}}y^{p^2-1}z^{\frac{p^2-1}{2}} \\
y^{p^2} \cdot x^{\frac{p^2-3}{2}}y^{p^2-1}z^{\frac{p^2-1}{2}} \\
z^{p^2} \cdot x^{\frac{p^2-3}{2}}y^{p^2-1}z^{\frac{p^2-1}{2}}
\end{array}
\right.
\]
The second case cannot be realized since $2p^2-1$ is not even, hence we necessarily have $h=\frac{p^2-1}{2}$. This leaves two cases:
\[
\ds x^{3i+j} z^{2j+3k} = \left\{
\begin{array}{ll}
x^{p^2} \cdot x^{\frac{p^2-3}{2}} \\
z^{p^2} \cdot x^{\frac{p^2-3}{2}}
\end{array}
\right.
\]
The first one can only happen when $h=i=\frac{p^2-1}{2}$ and $j=k=0$, giving the monomial
\[
\ds (y^2z)^{\frac{p^2-1}{2}}(-x^3)^{\frac{p^2-1}{2}} = (-1)^{\frac{p^2-1}{2}}x^{p^2} \cdot x^{\frac{p^2-3}{2}}y^{p^2-1}z^{\frac{p^2-1}{2}} = x^{p^2} \cdot x^{\frac{p^2-3}{2}}y^{p^2-1}z^{\frac{p^2-1}{2}},
\]
because $p^2 \equiv 1$ (mod $4$) . For the second case $i,j$ and $k$ will have to satisfy
\begin{eqnarray*}
\left\{
\begin{array}{l}
j=  \frac{p^2-3}{2}-3i \\ \\
k=2i+1
\end{array}
\right.
\end{eqnarray*}
As both $j$ and $k$ must be non-negative, we have 
\[
\ds 0 \ls i \ls \left\lfloor \frac{p^2-3}{6} \right\rfloor = \frac{p^2-7}{6},
\]
because $p^2 \equiv 1$ (mod $6$). Furthermore, the coefficient of $(y^2z)^h(-x^3)^i(axz^2)^j(bz^3)^k$ under these conditions is precisely
\[
\ds \frac{(p^2-1)!}{\left(\frac{p^2-1}{2}\right)!i!\left(\frac{p^2-3}{2}-3i\right)!(2i+1)!} (-1)^i a^{\frac{p^2-3}{2} - 3i} b^{2i+1},
\]
proving that
\[
\ds c\left(\frac{p^2-3}{2},p^2-1,\frac{p^2-1}{2}\right)^{p^2} = x^{p^2} + \lambda z^{p^2} = (x+\lambda^{1/p^2}z)^{p^2},
\]
for $\lambda$ as above. Since we are working over $\FF_p$, we finally have $\lambda^{1/p^2} = \lambda$, showing the claim. With similar arguments, one can see that
\begin{eqnarray*}
\begin{array}{l}
c\left(\frac{p^2-1}{2},p^2-3,\frac{p^2+1}{2}\right) = ax+\mu z, \\ \\
c\left(\frac{p^2-7}{2},p^2-1,\frac{p^2+3}{2}\right) = -ax+\tau z, \\ \\
c\left(0,p^2-1,p^2-2\right) = \theta x+b^{\frac{p^2-1}{2}} z,
\end{array}
\end{eqnarray*}
where $\theta \in \FF_p$,
\[
\ds \mu = \sum_{i=0}^{\frac{p^2-1}{6}} \frac{(p^2-1)!}{\left(\frac{p^2-3}{2}\right)!i!\left(\frac{p^2-1}{2}-3i\right)!(2i+1)!} (-1)^i a^{\frac{p^2-1}{2} - 3i} b^{2i+1} \in \FF_p ,
\]
and
\[
\ds \tau = \sum_{i=0}^{\frac{p^2-7}{6}} \frac{(p^2-1)!}{\left(\frac{p^2-1}{2}\right)!i!\left(\frac{p^2-7}{2}-3i\right)!(2i+3)!} (-1)^i a^{\frac{p^2-7}{2} - 3i} b^{2i+3} \in \FF_p.
\]
Since $I_2 = (c(r,s,t) \mid 0 \ls r,s,t \ls p^2-1)$, in particular we have 
\[
(x+\lambda z,ax+\mu z, -ax + \tau z, \theta x+b^{\frac{p^2-1}{2}} z) \subseteq I_2.
\]
\begin{claim*} The following matrix has full rank:
\begin{eqnarray*} \left[\begin{matrix} a & \mu \\ 1 & \lambda \\ -a & \tau \\ \theta & b^{\frac{p^2-1}{2}} \end{matrix} \right].
\end{eqnarray*}
\end{claim*}
In fact if $\det \left[\begin{matrix} a & \mu \\ 1 & \lambda \end{matrix}\right] \ne 0$ then we are done, otherwise we have $\mu = a\lambda$. Note that
\begin{eqnarray*}
\begin{array}{c}
\ds \mu = \sum_{i=0}^{\frac{p^2-1}{6}} \frac{(p^2-1)!}{\left(\frac{p^2-3}{2}\right)!i!\left(\frac{p^2-1}{2}-3i\right)!(2i+1)!} (-1)^i a^{\frac{p^2-1}{2} - 3i} b^{2i+1} \\ \\
= \ds \left[\sum_{i=0}^{\frac{p^2-7}{6}} \frac{(p^2-1)!}{\left(\frac{p^2-3}{2}\right)!i!\left(\frac{p^2-1}{2}-3i\right)!(2i+1)!} (-1)^i a^{\frac{p^2-1}{2} - 3i} b^{2i+1}\right] + (-1)^{\frac{p^2-1}{6}}\frac{(p^2-1)!}{\left(\frac{p^2-3}{2}\right)!\left(\frac{p^2-1}{6}\right)!\left(\frac{p^2+2}{3}\right)!} b^{\frac{p^2+2}{3}} \\ \\
= \ds  \left[\sum_{i=0}^{\frac{p^2-7}{6}} \frac{(p^2-1)!}{\left(\frac{p^2-3}{2}\right)!i!\left(\frac{p^2-1}{2}-3i\right)!(2i+1)!} (-1)^i a^{\frac{p^2-1}{2} - 3i} b^{2i+1}\right] + \frac{3(p^2-1)!}{\left(\frac{p^2-1}{2}\right)!\left(\frac{p^2-7}{6}\right)!\left(\frac{p^2+2}{3}\right)!} b^{\frac{p^2+2}{3}},
\end{array}
\end{eqnarray*}
where the last equality comes from the fact that $p^2 \equiv 1$ (mod $12$) and by rearranging the binomial coefficients. Also,
\[
\ds a\lambda = \sum_{i=0}^{\frac{p^2-7}{6}} \frac{(p^2-1)!}{\left(\frac{p^2-1}{2}\right)!i!\left(\frac{p^2-3}{2}-3i\right)!(2i+1)!} (-1)^i a^{\frac{p^2-1}{2} - 3i} b^{2i+1}.
\] 
Using that $\mu = a \lambda$, we get
\begin{eqnarray*}
\begin{array}{c}
\ds \frac{3(p^2-1)!}{\left(\frac{p^2-1}{2}\right)!\left(\frac{p^2-7}{6}\right)!\left(\frac{p^2+2}{3}\right)!} b^{\frac{p^2+2}{3}}\\ \\
= \ds \sum_{i=1}^{\frac{p^2-7}{6}} \left[\frac{1}{\frac{p^2-1}{2}} - \frac{1}{\frac{p^2-1}{2}-3i}\right] \frac{(p^2-1)!}{\left(\frac{p^2-3}{2}\right)!i!\left(\frac{p^2-3}{2}-3i\right)!(2i+1)!} (-1)^i a^{\frac{p^2-1}{2} - 3i} b^{2i+1} \\ \\
\ds = \sum_{i=1}^{\frac{p^2-7}{6}}  \frac{-3(p^2-1)!}{\left(\frac{p^2-1}{2}\right)!(i-1)!\left(\frac{p^2-1}{2}-3i\right)!(2i+1)!} (-1)^i a^{\frac{p^2-1}{2} - 3i} b^{2i+1} \\ \\
= \ds 3\sum_{i=0}^{\frac{p^2-13}{6}}  \frac{(p^2-1)!}{\left(\frac{p^2-1}{2}\right)!i!\left(\frac{p^2-7}{2}-3i\right)!(2i+3)!} (-1)^i a^{\frac{p^2-7}{2} - 3i} b^{2i+3},
\end{array}
\end{eqnarray*}
where the last equality comes from reindexing the sum. Since $3$ is invertible in $\FF_p$ we get that
\begin{eqnarray*}
\begin{array}{c}
\ds 0 = \left[\sum_{i=0}^{\frac{p^2-13}{6}}  \frac{(p^2-1)!}{\left(\frac{p^2-1}{2}\right)!i!\left(\frac{p^2-7}{2}-3i\right)!(2i+3)!} (-1)^i a^{\frac{p^2-7}{2} - 3i} b^{2i+3}\right] - \left[\frac{(p^2-1)!}{\left(\frac{p^2-1}{2}\right)!\left(\frac{p^2-7}{6}\right)!\left(\frac{p^2+2}{3}\right)!} b^{\frac{p^2+2}{3}}\right] \\ \\
= \ds \sum_{i=0}^{\frac{p^2-7}{6}}  \frac{(p^2-1)!}{\left(\frac{p^2-1}{2}\right)!i!\left(\frac{p^2-7}{2}-3i\right)!(2i+3)!} (-1)^i a^{\frac{p^2-7}{2} - 3i} b^{2i+3}
\end{array}
\end{eqnarray*}
because $p^2 \equiv 1$ (mod $12$), hence $(-1)^{\frac{p^2-7}{6}} = -1$. But the latter is precisely $\tau$, and this argument shows that if $a \lambda = \mu$, then $\tau = 0$. Therefore either $a \lambda \ne \mu$ or
\[
\det \left[\begin{matrix} -a & \tau \\ \theta & b^{\frac{p^2-1}{2}} \end{matrix} \right] = \det \left[\begin{matrix} -a & 0 \\ \theta & b^{\frac{p^2-1}{2}} \end{matrix} \right] = -ab^{\frac{p^2-1}{2}} \ne 0.
\]
Hence the matrix has rank two, and the Claim follows. \\
\noindent But this shows that there are linear combinations of $x+\lambda z,ax+\mu z, -ax + \tau z$ and $\theta x+b^{\frac{p^2-1}{2}} z$ that produce $x$ and $z$, that is $(x,z) \subseteq I_2$. By Lemma \ref{y} we always have that $y \in I_2$. Therefore
\[
(x,y,z) \subseteq I_2 \subseteq I_1 \subsetneq R,
\]
implying that $I_1 = I_2$ and hence that $f$ has level two.
\end{proof}

\section{A Macaulay2 session}
The purpose of this section is to explain, through a Macaulay2 session, how the algorithm introduced in Section \ref{the algorithm described in detail} works in specific examples.

We begin clearing the previous input and loading our scripts.
\begin{verbatim}
clearAll;
load "differentialOperator.m2";
\end{verbatim}
We fix the polynomial ring that we will use throughout the following examples.
\begin{verbatim}
p=2;
F=ZZ/p;
R=F[x,y,z,w];
\end{verbatim}
The first example illustrates a particular case of Theorem \ref{monomial}.
\begin{verbatim}
i6 : f=x^3*y^5*z^7*w^4;

i7 : L=differentialOperatorLevel(f);

i8 : L

                2 4 6 3
o8 = (4, ideal(x y z w ))
\end{verbatim}
This means that, in this case, $f$ has level $4$ and that $I_4 (f^{2^4-1})=\left(x^2 y^4 z^6 w^3\right)$. Now, we produce a differential operator $\delta$ of level $4$ such that $\delta (1/f)=1/f^2$.
\begin{verbatim}
i7 : DifferentialOperator(f)

o7 = | x10y6z2w8 d_0^15d_1^15d_2^15d_3^15x2y4z6w3 |
\end{verbatim}
As the reader will note, the output is a row matrix; it means that $\delta$ turns out to be
\[
x^{10}y^6z^2w^8\cdot D_{x,2^4-1}D_{y,2^4-1}D_{z,2^4-1}D_{w,2^4-1}\cdot x^2y^4z^6w^3.
\]
Our next aim is to illustrate a particular case of Corollary \ref{linforms}.
\begin{verbatim}
ii8 : f=x^3*(x+y)^5*(x+y+z)^7*(x+y+z+w)^4;

ii9 : L=differentialOperatorLevel(f);

ii10 : first L

oo10 = 4

\end{verbatim}
Now, a particular case of Proposition \ref{sqfree}.
\begin{verbatim}
ii13 : f=x^2+y^2+z^3+x*y*z*w;

ii14 : L=differentialOperatorLevel(f);

ii15 : L

oo15 = (1, ideal 1)
\end{verbatim}
This means that $f$ has level one. Now, we produce the corresponding differential operator.
\begin{verbatim}
i7 : DifferentialOperator(f)

o7 = | 1 d_0d_1d_2d_3 |
\end{verbatim}
It means that the differential operator produced in this case is $D_{x,1}D_{y,1}D_{z,1}D_{w,1}$.

The following computation may be regarded as a particular case of Proposition \ref{allsqfree}.
\begin{verbatim}
i6 : f=x*w-y*z;

i7 : DifferentialOperator(f)

o7 = | 1 d_0d_1d_2d_3yz |
\end{verbatim}
It means that, in this case, the differential operator produced is $D_{x,1}D_{y,1}D_{z,1}D_{w,1}\cdot yz$.

Next, a homogeneous quadric (cf.\,Proposition \ref{quadratic forms}).
\begin{verbatim}
i6 : f=x^2+y^2+x*y+z^2+w^2;

i7 : DifferentialOperator(f)

o7 = | 1 d_0d_1d_2d_3zw |
\end{verbatim}
We finish with a homogeneous cubic.
\begin{verbatim}
ii12 : f=x^3+y^3+z^3+w^3;

ii13 : L=differentialOperatorLevel(f);

ii14 : L

oo14 = (2, ideal (w, z, y, x))

oo14 : Sequence

ii15 : DifferentialOperator(f)

oo15 = | w2 d_0^3d_1^3d_2^3d_3^3x3z3w |
     | z2 d_0^3d_1^3d_2^3d_3^3x3zw3 |
     | y2 d_0^3d_1^3d_2^3d_3^3yz3w3 |
     | x2 d_0^3d_1^3d_2^3d_3^3xy3z3 |

\end{verbatim}
This means that our differential operator in this case turns out to be
\begin{align*}
& w^2\cdot D_{x,3}D_{y,3}D_{z,3}D_{w,3}\cdot x^3z^3 w\quad +z^2\cdot D_{x,3}D_{y,3}D_{z,3}D_{w,3}\cdot x^3zw^3\\
& +y^2\cdot D_{x,3}D_{y,3}D_{z,3}D_{w,3}\cdot yz^3w^3\quad +x^2\cdot D_{x,3}D_{y,3}D_{z,3}D_{w,3}\cdot xy^3z^3.
\end{align*}

\section{The code of the algorithm}
The aim of this final section is to show our implementation in Macaulay2 of the algorithm described in Section \ref{the algorithm described in detail} of this manuscript; the whole code can be found in \cite{BoixDeStefaniVanzoM2}. Throughout this section, $R=\mathbb{F}_p [x_1,\ldots ,x_d]$ will be the polynomial ring with $d$ variables with coefficients in the field $\mathbb{F}_p$.

First of all, we write down the code of a procedure which, given an ideal $I$ of $R$, return as output $I^{[p^e]}$, i.e., the ideal generated by all the $p^e$th powers of elements in $I$. The method below is based on code written by M.\,Katzman and included, among other places, in \cite{FsplittingM2}.

\begin{verbatim}
frobeniusPower(Ideal,ZZ) := (I,e) ->(
R:=ring I;
p:=char R;
local u;
local answer;
G:=first entries gens I;
if (#G==0) then
{
     answer=ideal(0_R);
}
else
{
     N:=p^e;
     answer=ideal(apply(G, u->u^N));
};
answer
);
\end{verbatim}
Now, we exhibit the code of a function which, given ideals $A,B$ of $R$, produces as output the ideal $I_e (A)+B$. For our purposes in this manuscript, $B=(0)$ and $A$ is a principal ideal. Once again, this is based on code written by Katzman and included in \cite{FsplittingM2}.

\begin{verbatim}
ethRoot(Ideal,Ideal,ZZ):= (A,B,e) ->(
R:=ring(A);
pp:=char(R);
F:=coefficientRing(R);
n:=rank source vars(R);
vv:=first entries vars(R);
R1:=F[vv, Y_1..Y_n, MonomialOrder=>ProductOrder{n,n},MonomialSize=>16];
q:=pp^e;
J0:=apply(1..n, i->Y_i-substitute(vv#(i-1)^q,R1));
S:=toList apply(1..n, i->Y_i=>substitute(vv#(i-1),R1));
G:=first entries compress( (gens substitute(A,R1))%gens(ideal(J0)) );
L:=ideal 0_R1;
apply(G, t->
{
    L=L+ideal((coefficients(t,Variables=>vv))#1);
});
L1:=L+substitute(B,R1);
L2:=mingens L1;
L3:=first entries L2;
L4:=apply(L3, t->substitute(t,S));
use(R);
substitute(ideal L4,R)
);
\end{verbatim}
Next, we provide the code of our implementation of Algorithm \ref{calculo del nivel}. Namely, given $f\in R$, the procedure below gives as output the pair $\left(e, I_e\left(f^{p^e-1}\right)\right)$, where $e$ is the level of $f$, and $I_e\left(f^{p^e-1}\right)$ is the ideal where the chain \eqref{cadena incordio} stabilizes. As the reader can easily point out, this method is just turning Theorem \ref{first step of the algorithm} into an algorithm.

\begin{verbatim}
differentialOperatorLevel(RingElement):=(f) ->(
R:=ring(f);
p:=char(R);
local J;
local I;
local e;
e=0;
flag:=true;
local q;
local N;
while (flag) do
{
	e=e+1;
	q=p^e;
	N=q-1;
	I=ethRoot(ideal(f^N),ideal(0_R),e);
	J=frobeniusPower(I,e);
	N=q-p;
	if ((f^N)% J==0) then flag=false;
};
(e,I)
);
\end{verbatim}
Now, let $x=x_i$ (for some $1\ls i\ls d$), $n\gs 0$, and $f\in R$. The below procedure returns as output
\[
\frac{1}{n!}\frac{\partial^n f}{\partial x^n}.
\]
It is worth noting that, in some intermediate step of this method, we have to lift our data to characteristic zero in order to avoid problems with the calculation of $1/n!$.

\begin{verbatim}
DiffOperator(RingElement,ZZ,RingElement):=(el,numb,funct)->
(
     R:=ring(el);
     vv:=first entries vars(R);
     S:=QQ[vv];
     funct1:=substitute(funct,S);
     el=substitute(el,S);
     for i to numb-1 do
	   funct1=diff(el,funct1);
     funct1=1/(numb!)*funct1;
     funct=substitute(funct1,R);
     el=substitute(el,R);
     use R;
     return funct;
);
\end{verbatim}
Next, we provide a method which, given a monomial $\mathbf{x}^{\alpha}=x_1^{a_1}\cdots x_d^{a_d}$ with $0\ls a_i\ls p-1$ for all $i$, returns as output the differential operator $\delta_{\alpha}\in\mathcal{D}_R^{(e)}$ where, for any other monomial $\mathbf{x}^{\beta}=x_1^{b_1}\cdots x_d^{b_d}$ with $0\ls b_i\ls p-1$ for all $i$, $\delta_{\alpha}$ acts in the following way:
\[
\delta_{\alpha}\left(\mathbf{x}^{\beta}\right)=\begin{cases} 1,\text{ if }\alpha=\beta,\\ 0,\text{ otherwise.}\end{cases}
\]
As the reader can easily point out, the below method is just turning Claim \ref{Dirac delta claim} into an algorithm.
\begin{verbatim}
DeltaOperator(RingElement,ZZ):=(el,pe)->
(
     R:=ring(el);
     indet:=vars(R);
     for i to numColumns(indet)-1 do
     (
	 deg:=degree(indet_(0,i),el);	              			   		       
	 listdiff_i=pe-1-deg;
     );	 
     return listdiff;
);
\end{verbatim}
The next two methods are quite technical; however, both are necessary in order to avoid problems during the execution of our main procedure, which we are almost ready to describe.
\begin{verbatim}
checkcondition=method();

--- checkcondition finds a random monomial in startingpol//pol with all the

--- variables in degree <p^e

checkcondition(ZZ,RingElement,Ideal,ZZ):=(pe,startingpol,J,indexvar) ->
(    
     pol:=(first entries gens J)_indexvar;
     genJ=first entries gens J;
     R:=ring(pol);
     var:=vars R;
     nvars:=numColumns var;
     for i to numgens J-1 do
     	  sett_i=set first entries monomials(startingpol//genJ_i);
     supportpol= first entries monomials (startingpol//pol);
     contat=0;
     for i to #supportpol-1 do
     (
	  flag=true;
	  for k to nvars-1 do
	      if degree(var_(0,k),supportpol_i)>pe-1 then flag=false;
     	  if flag then 
              (
     		   rightsupport_contat=supportpol_i;
		   contat=contat+1;
	      );
     );
     correctmon=rightsupport_(random contat);     
     return correctmon;		   			   
);		         
-----------------------------------------

DifferentialAction=method();

--- DifferentialAction computes differential(element)

DifferentialAction(RingElement,RingElement):=(differential,element)->
(
     R:=ring(element);
     T:=ring(differential);
     var:=vars T;
     nvars:=numColumns var;
     total=0;
     diffmonomials=first entries monomials (differential);
     for i to #diffmonomials-1 do
     (
	  moltiplication=substitute(element,T);
	  for j to floor(nvars/2)-1 do
	     moltiplication=moltiplication*
	     var_(0,floor(nvars/2)+j)^(degree(var_(0,floor(nvars/2)+j),diffmonomials_i)); 
	  differentiation=coefficient(diffmonomials_i,differential)*moltiplication;
	  for j to floor(nvars/2)-1 do
	     differentiation=DiffOperator(var_(0,nvars-1-j),
		  degree(var_(0,floor(nvars/2)-1-j),diffmonomials_i),differentiation);
      total=total+differentiation;
      );
return total;
);
\end{verbatim}
We conclude showing our implementation of the algorithm described in Section \ref{the algorithm described in detail}, which is the main result of this paper.
\begin{verbatim}
DifferentialOperator(RingElement):=(f)->
(
     R:=ring(f);
     p:=char(R);
     F:=coefficientRing(R);
     local e;
     local J;
     (e,J)=differentialOperatorLevel(f);
     J=frobeniusPower(J,e);
     variable:=vars(R);
     variable1:=first entries variable;
     genJ:=first entries gens J;
     nvars:=numColumns vars R;
     powerf:=f^(p^e-1);
     T:=F[d_0..d_(nvars-1),variable1]; --creating ring of differentials
     varDelta:=first entries vars T;
     powerfT=substitute(powerf,T);
     use R;
     condition:=true;
     while condition do
     (
     for i to numgens J-1 do
     (
 	   listsupport=checkcondition(p^e,powerf,J,i);
	   listdiff=DeltaOperator(listsupport,p^e);
 	   delta1=1;
 	   delta2=1;
 	  for j to nvars-1 do
 	  ( 
      	       use T;
      	       delta1=delta1*varDelta_j^(p^e-1);
      	       expon2=listdiff_j;
      	       delta2=delta2*varDelta_(nvars+j)^expon2;
 	  );
          delta_i=delta1*delta2;
	  newgen_i=DifferentialAction(delta_i,powerfT);     
      );
      newgenmatr=matrix(newgen_0);
      for i from 1 to numgens J-1 do
          newgenmatr=newgenmatr|newgen_i;
      newgenmatrR=substitute(newgenmatr,R);
      if J==ideal(newgenmatrR) then
      (
      	   matalpha=f^(p^e-p)//newgenmatrR;
      	   matalpha=substitute(matalpha,T);
      	   for i to numgens J-1 do
               if i==0 then matrixdiff=matrix{{matalpha_(i,0),delta_i}}
               else matrixdiff=matrixdiff||matrix{{matalpha_(i,0),delta_i}};
      	  totale=0;
      	  for i to numgens J-1 do
               totale=totale+matalpha_(i,0)*DifferentialAction(delta_i,powerf);
      	  powerfp=substitute(f^(p^e-p),T);                          
	  return matrixdiff;
	  condition=false;
       );		   
      );
      use R;			      
);
\end{verbatim}

\section*{Acknowledgements}
This project started during the summer school PRAGMATIC 2014. The authors would like to thank Aldo Conca, Srikanth Iyengar, and Anurag Singh, for giving very interesting lectures and for sharing open problems. They also wish to thank Alfio Ragusa and all the organizers of PRAGMATIC 2014 for giving them the opportunity to attend the school. The authors also thank Josep \`Alvarez Montaner and, once again, Anurag Singh and Srikanth Iyengar, for many helpful suggestions concerning the material of this manuscript.

\bibliographystyle{alpha}
\bibliography{References}
\end{document}